\title{On the geometric nature of\\ characteristic classes of surface bundles}
\author{Thomas Church\footnotemark[1], Benson Farb\thanks{The first and second authors
    gratefully acknowledge support from the National Science
    Foundation.}  \ and Matthew Thibault}
\theoremstyle{plain}
\newtheorem{theorem}{Theorem}[section]
\newtheorem{question}[theorem]{Question}
\newtheorem{proposition}[theorem]{Proposition}
\newtheorem*{theorem:main}{Theorem \ref{theorem:MMMgeom}}
\newtheorem*{theorem:cobordism}{Theorem \ref{theorem:novikov}}
\newtheorem*{corollary:GT}{Corollary \ref{GTcorollary}}
\newtheorem*{corollary:obstruct}{Corollary \ref{corollary:obstruction}}
\newtheorem{corollary}[theorem]{Corollary}
\theoremstyle{definition}
\newtheorem{definition}[theorem]{Definition}
\newcommand{\nc}{\newcommand}
\nc{\dmo}{\DeclareMathOperator}
\nc{\Q}{\mathbb{Q}}
\nc{\R}{\mathbb{R}}
\nc{\Z}{\mathbb{Z}}
\nc{\C}{\mathbb{C}}
\nc{\cS}{\mathcal{S}}
\dmo{\Mod}{Mod}
\dmo{\Diff}{Diff}
\dmo{\Homeo}{Homeo}
\dmo{\dist}{dist}
\dmo\BDiff{BDiff}
\dmo\SO{SO}
\renewcommand{\epsilon}{\varepsilon}
\nc{\coloneq}{\mathrel{\mathop:}\mkern-1.2mu=}
\nc{\margin}[1]{\marginpar{\scriptsize #1}}
\nc{\para}[1]{\medskip\noindent\textbf{#1.}}
\begin{document}
\maketitle
\begin{abstract}
Each Morita--Mumford--Miller (MMM) class $e_n$ assigns to each genus $g\geq 2$ surface bundle $\Sigma_g\to  E^{2n+2}\to M^{2n}$ an integer $e_n^\#(E\to M)\coloneq \langle e_n,[M]\rangle\in\Z$.  We prove that when $n$ is odd 
the number $e_n^\#(E\to M)$ depends only on the diffeomorphism type of $E$, not on $g$, $M$ or the map $E\to M$.  More generally, we prove that $e_n^\#(E\to M)$ depends only on the cobordism class of $E$.  Recent work of Hatcher implies that this stronger statement is false when $n$ is even. If $E\to M$ is a holomorphic fibering of complex manifolds, we show that for every $n$ the number $e_n^\#(E\to M)$ only depends on the complex cobordism type of $E$. 

We give a general procedure to construct manifolds fibering as surface bundles in multiple ways, providing infinitely many examples to which our theorems apply.  As an application of our results we give a new proof of the rational case of a recent theorem of Giansiracusa--Tillmann \cite[Theorem A]{GT} that the odd MMM classes $e_{2i-1}$ vanish for any surface bundle which bounds a handlebody bundle.   We show how the MMM classes can be seen as obstructions to low-genus fiberings.  Finally, we discuss a number of open questions that arise from this work.
\end{abstract}

\section{Introduction}
The purpose of this paper is to explain our discovery of certain coincidences between certain 
characteristic numbers associated to surface bundles.  This can be considered a step towards understanding the geometric meaning of the Morita--Mumford--Miller classes.  

\para{Characteristic classes of surface bundles} 
Let $\Sigma_g$ be a closed oriented surface of genus $g\geq 2$.   A \emph{$\Sigma_g$--bundle} over a base space $B$ is a fiber bundle 
\begin{equation}
\label{eq:bundle1}
\Sigma_g\to E\to B
\end{equation}
with structure group $\Diff^+(\Sigma_g)$. When $E$ and $B$ are closed smooth manifolds, Ehresmann's theorem implies that any surjective submersion $\pi\colon E\to B$ whose fiber is $\Sigma_g$ (with a consistent orientation on $\ker d\pi$) is a $\Sigma_g$--bundle in this sense.

An $i$--dimensional \emph{characteristic class} $c$ for $\Sigma_g$--bundles is a natural  association of 
an element $c(E\to B)\in H^i(B;\Q)$ to every bundle as in \eqref{eq:bundle1}.  To be \emph{natural} means that if $\Sigma_g\to E'\to B'$ is any bundle and if $f\colon B\to B'$ is any continuous map covered by a oriented bundle map $E\to E'$ then \[c(E\to B)=f^\ast c(E'\to B').\]  If the association $c$ is defined for all $g\geq 2$ then $c$ is called a \emph{characteristic class of surface bundles}.  Characteristic classes are fundamental isomorphism invariants for surface bundles.   When $B$ is a closed, oriented, 
$i$--dimensional manifold, one obtains a numerical invariant $c^{\#}(E\to B)\in\Q$, called a \emph{characteristic number}, by pairing $c\in H^i(B;\Q)$ with the fundamental class $[B]\in H_i(B;\Q)$:
\[c^\#(E\to B)\coloneq \langle c(E\to B),[B]\rangle\in\Q.\]

\para{Notational convention}  Unless otherwise specified, all manifolds are assumed to be smooth and oriented, all bundles in this paper are assumed to be bundles of closed manifolds, and all maps are assumed to be orientation-preserving.   

\para{Morita--Mumford--Miller classes} The main examples of characteristic classes of $\Sigma_g$--bundles were given by Miller, Morita and Mumford (see, e.g.\ \cite{Mo}), and can be constructed as follows.   Let $\Sigma\to
E\overset{\pi}{\longrightarrow} B$ be any surface bundle. The kernel of
the differential $d\pi$ is a 2--dimensional oriented real vector
bundle $T\pi\to E$.  Associated to an oriented $2$--plane bundle is its \emph{Euler class} 
$e(T\pi)\in H^2(E)$;  we will denote it simply by $e\in H^2(E)$.  Let $\int_{\Sigma}\colon
H^*(E)\to H^{*-2}(B)$ be the Gysin map, given by integration along the
fiber $\Sigma$.  The \emph{$i$-th Morita--Mumford--Miller (or \emph{MMM}) class} 
$e_i(E\to B)\in H^{2i}(B)$ is defined by the formula
\[e_i(E\to B)=\int_\Sigma e^{i+1}.\]

The naturality of the Euler class implies that each $e_i$ is a characteristic class for $\Sigma_g$--bundles. Moreover, each $e_i$ is \emph{stable}; this means roughly that when it is possible to take the fiber-wise connected sum of $\Sigma_g\to E\to B$ with $\Sigma_1\to \Sigma_1\times B\to B$, the resulting bundle $\Sigma_{g+1}\to E'\to B$ satisfies $e_i(E'\to B)=e_i(E\to B)$. More strongly, each $e_i$ is \emph{primitive}, meaning roughly that when the fiber-wise connected sum of $\Sigma_g\to E_1\to B$ with $\Sigma_h\to E_2\to B$ exists, the resulting bundle $\Sigma_{g+h}\to E'\to B$ satisfies \[e_i(E'\to B)=e_i(E_1\to B)+e_i(E_2\to B).\] Generalizing a construction due to Kodaira and Atiyah, Morita
and Miller constructed explicit $\Sigma_g$--bundles to show that each 
polynomial $P(e_1,e_2,\ldots)$ yields a characteristic class which is nonzero for $g$ large enough (depending on $P$).

Madsen--Weiss \cite{MW} proved the remarkable theorem that the MMM classes yield all the stable characteristic classes:  any characteristic class of surface bundles which is stable must be a polynomial $P(e_1,e_2,\ldots)$ in the $e_i$, and any characteristic class of surface bundles which is primitive must be one of the MMM classes $e_i$ themselves.  In spite of this progress, many basic geometric questions concerning the MMM classes $e_i$ remain open.  

\para{Fibered 4--manifolds} The main result of this paper begins with an observation about $e_1$.  The Miller--Morita examples proving nontriviality of $e_1$ reduce to the original 
$4$--dimensional surface bundles $\Sigma_g\to E^4\to \Sigma_h$ constructed by Kodaira and Atiyah 
for certain specific $g,h$ (see e.g.\ \cite[\S4.3.3]{Mo}). It turns out that these
4--manifolds actually fiber as a surface bundle over a surface in multiple distinct
ways.  For example the Atiyah--Kodaira manifold $$\Sigma_4\to N^4\to \Sigma_{17}$$ also fibers 
as $$\Sigma_{49}\to N^4\to \Sigma_2.$$ 

The associated characteristic
classes 
\[e_1(N^4\to \Sigma_{17})\in H^2(\Sigma_{17};\Z)
\qquad\text{
  and }\qquad e_1(N^4\to \Sigma_2)\in H^2(\Sigma_2;\Z)\] are \emph{a
  priori} unrelated.  (Indeed, they come from cohomology classes living in two
different groups, $H^2(\BDiff^+(\Sigma_4);\Z)$ and
$H^2(\BDiff^+(\Sigma_{49});\Z)$ respectively; see Section~\ref{section:bordism}.)   The base spaces of the two fiberings of $N^4$ are also different manifolds.  However, pairing each of these 
characteristic classes with the fundamental class of the base space of the bundle gives 
(via a computation, e.g. following Hirzebruch \cite{Hi}):

\[\langle e_1(N^4\to \Sigma_{17}),[\Sigma_{17}]\rangle=96
\qquad\qquad \langle e_1(N^4\to \Sigma_2),[\Sigma_2]\rangle=96\]
and so the characteristic numbers of the two different bundles coincide.    

This coincidence is explained by the Hirzebruch Signature Formula, which implies 
for any fibering $\Sigma_g\to
N^4\to \Sigma_h$ the formula 
\begin{equation}
\label{eq:sigformula}
\langle
e_1(N^4\to \Sigma_h),[\Sigma_h]\rangle=3\cdot \sigma(N^4)
\end{equation}
where $\sigma(N^4)$ is the \emph{signature} of $N^4$, i.e.\ the signature of the intersection pairing on $H^2(N^4;\Q)$.   In particular, since the right hand side of \eqref{eq:sigformula} depends only on $N^4$ and not on the choice of fibering $N^4\to \Sigma_h$, the same holds for the left 
hand side.

\para{Geometric characteristic classes} With the above example in mind we
make the following definition.  

\begin{definition}[\textbf{Geometric characteristic class}]
\label{def}
An $i$--dimensional characteristic class $c$ for surface bundles is \emph{geometric} if the associated characteristic number $c^{\#}$ depends only on the diffeomorphism class of the total space of the bundle.  More precisely, whenever 
\[\Sigma_g\to E^{i+2}\to M^{i}\] and \[\Sigma_h\to X^{i+2}\to N^{i}\]
are surface bundles with diffeomorphic total spaces $E^{i+2}\approx X^{i+2}$, the associated characteristic numbers are equal:
\[c^{\#}(E^{i+2}\to M^i)=c^{\#}(X^{i+2}\to N^i)\in \Q.\]
\end{definition}

\bigskip
We also have the corresponding notions of \emph{geometric with respect to homeomorphisms, homotopy equivalences, cobordisms, etc.}, where each characteristic number $c^\#(E^{i+2}\to M^i)$ depends only on the homeomorphism type (resp. homotopy type, cobordism class, etc.) of the total space $E^{i+2}$, not on the choice of fibering of $E^{i+2}$.  As we will see below, being geometric with respect to these more general equivalences can be a strictly more difficult condition to satisfy than being geometric with respect to diffeomorphisms.

\bigskip
The first main theorem of this paper is the following.\pagebreak
\begin{theorem}[\textbf{Odd MMM classes are geometric}]
\label{theorem:MMMgeom}
For each $n\geq 1$ the Morita--Mumford--Miller class $e_{2n-1}$ is
geometric.
\end{theorem}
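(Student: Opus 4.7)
The plan is to express the characteristic number $e_{2n-1}^{\#}(E^{4n}\to M^{4n-2})$ as a fixed rational linear combination of Pontryagin numbers of the total space $E$.  Since rational Pontryagin numbers are oriented cobordism invariants (by Thom's theorem), this would establish the stronger cobordism-level statement promised in the abstract, which in particular implies the geometric nature of $e_{2n-1}$.

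By the definition of the Gysin map $\int_\Sigma$,
\[e_{2n-1}^{\#}(E\to M)\;=\;\langle e(T\pi)^{2n},[E]\rangle\;\in\;\Q.\]
The short exact sequence of vector bundles $0\to T\pi\to TE\to\pi^{*}TM\to 0$ splits smoothly after a choice of Riemannian metric on $E$, giving $TE\cong T\pi\oplus\pi^{*}TM$.  Since $T\pi$ has rank $2$, its total Pontryagin class is $1+e^2$, and the Whitney sum formula yields
\[p(TE)\;=\;(1+e^2)\cdot\pi^{*}p(TM)\qquad\text{in } H^{*}(E;\Q).\]

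The key dimensional input is $\dim M = 4n-2$, so $H^{k}(M;\Q)=0$ for $k\geq 4n-1$ and every monomial in the classes $\pi^{*}p_j(TM)$ of total degree $\geq 4n$ vanishes in $H^{*}(E;\Q)$.  Extracting the degree-$4n$ part of the equivalent identity $\pi^{*}p(TM)=p(TE)(1+e^2)^{-1}$ and invoking $\pi^{*}p_n(TM)=0$ produces the relation
\[\sum_{k=0}^{n}(-1)^k\,e^{2k}\,p_{n-k}(TE)\;=\;0\qquad\text{in } H^{4n}(E;\Q),\]
which writes $e^{2n}$ as a sum of terms $e^{2k}p_{n-k}(TE)$ with $k<n$.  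Combining this with the further vanishings $\pi^{*}(p_{i_1}(TM)\cdots p_{i_r}(TM))=0$ for $i_1+\cdots+i_r\geq n$ and iterating, one should inductively eliminate the stray Euler factors to obtain a universal polynomial identity
\[e^{2n}\;=\;Q_n\bigl(p_1(TE),\ldots,p_n(TE)\bigr)\qquad\text{in } H^{4n}(E;\Q).\]
Pairing with $[E]$ then presents $e_{2n-1}^{\#}(E\to M)$ as a fixed $\Q$-linear combination of Pontryagin numbers of $E$, completing the proof.

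The main obstacle is this last reduction: the relation coming from $\pi^{*}p_n(TM)=0$ alone is insufficient, as it reintroduces Euler powers $e^{2k}$ with $k<n$.  One must verify that the full system of vanishings coming from products of pullback Pontryagin classes of $M$ cuts down the graded ring enough in degree $4n$ to express $e^{2n}$ using the $p_j(TE)$ alone.  The base case $n=1$ is immediate ($e^2=p_1(TE)$, which recovers Hirzebruch's $e_1^{\#}=3\sigma(E)$), and $n=2$ yields $e^4=p_1(TE)^2-2\,p_2(TE)$ after a short calculation; the general case should follow by induction on $n$ or by a direct generating-function manipulation in the appropriate quotient of $\Q[e,p_1,p_2,\ldots]$.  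The restriction to \emph{odd} MMM classes is natural from this perspective, since for even-index $e_{2n}$ the total space has dimension $4n+2$ and no Pontryagin numbers exist---consistent with the failure signalled by Hatcher's work.
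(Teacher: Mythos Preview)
Your overall strategy coincides with the paper's: reduce to showing that $e^{2n}\in H^{4n}(E;\Q)$ equals a universal polynomial in the $p_i(TE)$, using the splitting $TE\cong T\pi\oplus\pi^*TM$ and the dimension of $M$. But you have flagged and then left open the only nontrivial step. The single relation $\sum_k(-1)^k e^{2k}p_{n-k}(TE)=0$ is, as you say, insufficient, and the phrase ``should follow by induction on $n$ or by a direct generating-function manipulation'' is not a proof: no inductive hypothesis is stated, and no mechanism is given for why the relations $\pi^*p_I(TM)=0$ for $|I|\geq n$ suffice to eliminate the remaining powers of $e$. As written, the argument is incomplete precisely at its crux.

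The paper closes this gap by naming the polynomial explicitly: it is the $n$-th Newton polynomial $f_n$, defined recursively by $f_1=x_1$ and $f_n=\sum_{k=1}^{n-1}(-1)^{k-1}x_kf_{n-k}+(-1)^{n-1}nx_n$. A short induction on $n$ establishes the identity
\[f_n(t+x_1,\,tx_1+x_2,\,\ldots,\,tx_{n-1}+x_n)=t^n+f_n(x_1,\ldots,x_n).\]
Substituting $t=e^2$ and $x_i=\pi^*p_i(TM)$, so that $tx_{i-1}+x_i=p_i(TE)$ by your Whitney-sum formula, gives
\[f_n\bigl(p_1(TE),\ldots,p_n(TE)\bigr)=e^{2n}+\pi^*f_n\bigl(p_1(TM),\ldots,p_n(TM)\bigr),\]
and the second term lies in $\pi^*H^{4n}(M)=0$. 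Note that only this single vanishing is needed, not the full family of product relations you invoke. Your computations $Q_1=p_1$ and $Q_2=p_1^2-2p_2$ are exactly $f_1$ and $f_2$; the recursion above is what carries the pattern to all $n$.
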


We prove Theorem~\ref{theorem:MMMgeom} in Section~\ref{section:geometric}.  Theorem~\ref{theorem:MMMgeom} leads to an obvious question:

\begin{question}
\label{question:even}
Are the even MMM classes $e_{2n}$ geometric?
\end{question}
In all examples that we have considered of surface bundles fibering in multiple ways, the even MMM classes $e_{2n}$ do not depend on the fibering, supporting an affirmative answer 
to Question~\ref{question:even}. In particular, this holds for the surface bundles constructed in Section~\ref{section:multifiberings}.   On the other hand, we will see later in the introduction that a slightly weaker version of Question~\ref{question:even} has a negative answer.

\para{Geometric with respect to cobordism} Theorem~\ref{theorem:MMMgeom} actually follows from the following more general result.    

\begin{theorem}[\textbf{Odd MMM classes are cobordism invariants}]
\label{theorem:novikov}
For any surface bundle $\Sigma\to E^{4n}\to M^{4n-2}$, the characteristic number $e_{2n-1}^\#(E\to M)$ can be expressed as the characteristic number corresponding to a certain explicit polynomial in the Pontryagin classes of $E$. It follows that for each $n\geq 1$ the MMM class $e_{2n-1}$ is geometric with respect to smooth cobordisms.
\end{theorem}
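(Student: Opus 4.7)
The plan is to exhibit a universal polynomial identity
\[e^{2n} = N_n(p_1(TE), \ldots, p_n(TE)) \in H^{4n}(E;\Q)\]
for every surface bundle $\Sigma \to E^{4n} \to M^{4n-2}$; integrating over $E$ and using $\int_E e^{2n} = \int_M \int_\Sigma e^{2n} = e_{2n-1}^\#(E\to M)$ then realizes the MMM characteristic number as a fixed polynomial in the Pontryagin numbers of $E$, and Thom's theorem that Pontryagin numbers are oriented cobordism invariants will finish the proof.

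The setup uses the splitting $TE \cong T\pi \oplus \pi^* TM$ coming from the submersion $\pi$, together with the fact that the oriented $2$--plane bundle $T\pi$ has $p_1(T\pi) = e^2$, to get the multiplicative identity $p(TE) = (1+e^2)\cdot \pi^* p(TM)$. Via the splitting principle for real oriented bundles (made precise, if desired, by pulling back to the flag bundle of $TM$), I would introduce formal Pontryagin roots $y_1, \ldots, y_{2n-1}$ for the rank $(4n-2)$ bundle $\pi^* TM$ and $\{e, y_1, \ldots, y_{2n-1}\}$ for $TE$, so that
\[p_k(TE) = \sigma_k(e^2, y_1^2, \ldots, y_{2n-1}^2), \qquad \pi^* p_k(TM) = \sigma_k(y_1^2, \ldots, y_{2n-1}^2),\]
where $\sigma_k$ denotes the $k$th elementary symmetric polynomial.

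Let $N_n(x_1, \ldots, x_n)$ be the classical Newton--Girard polynomial expressing the $n$th power sum $\sum_i z_i^n$ as a polynomial in $\sigma_1(z), \ldots, \sigma_n(z)$ (so $N_1(x_1)=x_1$, $N_2(x_1,x_2)=x_1^2-2x_2$, $N_3(x_1,x_2,x_3)=x_1^3-3x_1x_2+3x_3$, and so on). Applying Newton's identity to the $2n$ roots of $TE$ gives
\[e^{2n} + \sum_{i=1}^{2n-1} y_i^{2n} = N_n(p_1(TE), \ldots, p_n(TE))\]
in $H^{4n}(E;\Q)$, and applying it separately to the $2n-1$ roots of $\pi^* TM$ gives $\sum_i y_i^{2n} = N_n(\pi^* p_1(TM), \ldots, \pi^* p_n(TM))$. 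This latter class is pulled back from $H^{4n}(M;\Q)$, which vanishes because $\dim M = 4n-2$; subtracting therefore yields the required identity $e^{2n} = N_n(p_1(TE), \ldots, p_n(TE))$, and integrating over $E$ together with Thom's theorem completes the argument.

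The main obstacle is conceptual: a direct elimination using the relations $\pi^*(p_{i_1}(TM) \cdots p_{i_k}(TM)) = 0$ for all monomials of total degree exceeding $\dim M$, i.e.\ with $i_1 + \cdots + i_k \geq n$, involves $p(n)$ separate relations and becomes combinatorially tangled even for small $n$ (for $n=3$ it already requires combining the three relations indexed by the partitions $(3), (2,1), (1,1,1)$ and produces awkward auxiliary identities like $e^2(p_1^2-p_2) = p_1^3 + p_3 - 2 p_1 p_2$ before $e^{2n}$ can be isolated). The virtue of the splitting-principle approach is that all of these cancellations are packaged at one stroke into the single vanishing $\sum_i y_i^{2n} \in \pi^* H^{4n}(M;\Q) = 0$.
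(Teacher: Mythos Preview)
Your proof is correct and follows the same overall strategy as the paper: both use the splitting $TE = T\pi \oplus \pi^*TM$ with $p_1(T\pi)=e^2$, identify the Newton polynomials $N_n$ (the paper calls them $f_n$) as the right object, and conclude by observing that $N_n(\pi^*p_1(TM),\ldots,\pi^*p_n(TM))$ lies in $\pi^*H^{4n}(M)=0$.

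The one genuine difference is in how the key algebraic identity is established. The paper isolates the identity
\[f_n(t+x_1,\,tx_1+x_2,\,\ldots,\,tx_{n-1}+x_n)=t^n+f_n(x_1,\ldots,x_n)\]
and proves it by induction from the Newton recursion; they even remark after the proof that the power-sum definition ``seems to be more difficult'' to use here. Your approach shows otherwise: by passing to the flag bundle and writing $p_k(TE)=\sigma_k(e^2,y_1^2,\ldots,y_{2n-1}^2)$, the identity becomes the tautology that adding one variable $e^2$ to the list adds $e^{2n}$ to the $n$th power sum. This is cleaner and avoids the inductive computation entirely; the paper's route, on the other hand, is self-contained and never needs to invoke the splitting principle or leave $H^*(E)$.
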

Pontryagin 
proved that the Pontryagin numbers are invariant under smooth cobordism, so if $\Sigma_g\to E^{4n}\to M^{4n-2}$ and $\Sigma_h\to \widetilde{E}^{4n}\to \widetilde{M}^{4n-2}$ are surface bundles and $E$ and $\widetilde{E}$ are smoothly cobordant, then the corresponding characteristic numbers coincide: \[e^\#_{2n-1}(E\to M)=e^\#_{2n-1}(\widetilde{E}\to \widetilde{M}).\]
Moreover Novikov \cite[Theorem 1]{No} proved that the rational Pontryagin classes are invariant under homeomorphism of smooth manifolds, so it suffices to assume that $\widetilde{E}$ is \emph{homeomorphic} to a manifold smoothly cobordant to $E$. (Note that this is weaker than saying that $\widetilde{E}$ is topologically cobordant to $E$.)
The polynomials involved in Theorem~\ref{theorem:novikov}, known as the \emph{Newton polynomials}, are rather complicated. For example, for $e_{11}$ and $\Sigma\to E^{24}\to M^{22}$ we will prove that (suppressing the ${\ }^\#$ superscripts on the right side for readability):
\[
\begin{array}{lll}
e^\#_{11}(E\to M)& = &p_1(E)^6-6p_1(E)^4p_2(E)+6p_1(E)^3p_3(E)+9p_1(E)^2p_2(E)^2\\
& & {}-6p_1(E)^2p_4(E)+6p_1(E)p_5(E)-12p_1(E)p_2(E)p_3(E)\\
& & {}-2p_2(E)^3+6p_2(E)p_4(E)+3p_3(E)^2-6p_6(E)
\end{array}\]
It is also possible to give a direct proof of Theorem~\ref{theorem:novikov} using the properties of the Chern character (see, e.g., \cite{MS}). However, we feel there is value in giving a self-contained proof of the theorem.

In contrast to Theorem~\ref{theorem:novikov}, we will see below that the {\em even} classes $e_{2n}$ are {\em not} geometric with respect to cobordism; indeed no polynomial in the $\{e_{2i}\}$ is geometric with respect to cobordism.  In Section~\ref{section:multifiberings} we give a general method of constructing closed oriented manifolds which fiber as surface bundles in more than one way.  This gives 
many explicit examples to which Theorem~\ref{theorem:novikov} applies.

\para{Other characteristic classes of surface bundles}  We can ask a much more general question than Question~\ref{question:even}.  Suppose $P(t_1,t_2,\ldots)$ is a homogeneous polynomial of total degree $j$, where $t_i$ has degree $i$.  For any surface bundle $\Sigma_g\to E^{2j+2}\to M^{2j}$ over a closed, oriented manifold $M^{2j}$, we have a well-defined characteristic number \[P^{\#}(E\to M)\coloneq \big\langle P(e_1(E\to M),e_2(E\to M),\ldots ), [M]\big\rangle\in \Z\] and so we can formulate the following question.

\begin{question}
\label{question:general}
Which homogeneous polynomials $P$ in the MMM classes $e_i$ are geometric?
\end{question}

Note that the property of being geometric is not \emph{a priori} preserved under cup product. Thus an affirmative answer to Question~\ref{question:even} combined with Theorem~\ref{theorem:MMMgeom} would still not imply that other polynomials in the MMM classes are geometric.  For example, we do not know whether or not $e_1^2$ is geometric.

\para{Handlebody bundles} The {\em genus $g$ handlebody} $V_g$ is the 3--manifold with $\partial V_g=\Sigma_g$ obtained as the regular neighborhood of a graph in $\R^3$.
Giansiracusa--Tillmann \cite[Theorem A]{GT} recently proved that the odd MMM classes $e_{2n-1}$ vanish for any surface bundle which bounds a handlebody bundle. As a corollary to  Theorem~\ref{theorem:novikov} we obtain another proof of this result in rational cohomology.
\begin{corollary}
\label{GTcorollary}
For any surface bundle $\Sigma_g\to E\to B$ over any base space $B$, if there exists a bundle $V_g\to W\to B$ whose fiberwise boundary is $\Sigma_g\to E\to B$, then $e_{2n-1}(E\to B)=0\in H^{4n-2}(B;\Q)$ for all $n\geq 1$.
\end{corollary}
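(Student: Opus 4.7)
The plan is to deduce Corollary~\ref{GTcorollary} from Theorem~\ref{theorem:novikov} via the standard principle of detecting rational cohomology classes by pairing with closed manifolds.  By Thom's theorem the natural map $\Omega_*^{SO}(B)\otimes\Q\to H_*(B;\Q)$ is surjective, so a class $\alpha\in H^{4n-2}(B;\Q)$ vanishes if and only if $\langle f^*\alpha,[M]\rangle=0$ for every continuous map $f\colon M^{4n-2}\to B$ from a closed oriented smooth manifold.  Applying this principle to $\alpha=e_{2n-1}(E\to B)$ and using naturality of MMM classes, together with the observation that the pulled-back bundles $f^*E\to M$ and $f^*W\to M$ still comprise a surface bundle together with a handlebody extension, it suffices to prove the corollary under the additional assumption that $B=M$ is a closed oriented smooth $(4n-2)$-manifold.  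The task then reduces to showing that $e_{2n-1}^\#(E\to M)=0$ whenever $E$ is the fiberwise boundary of a handlebody bundle over $M$.

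Next I would invoke Theorem~\ref{theorem:novikov} to write
\[e_{2n-1}^\#(E\to M)=\bigl\langle Q\bigl(p_1(E),\ldots,p_n(E)\bigr),[E]\bigr\rangle\]
for a fixed polynomial $Q$ in Pontryagin classes, independent of the specific bundle.  Here $E$ has dimension $4n$, and the hypothesis furnishes a compact oriented $(4n+1)$-manifold $W$ (the total space of the handlebody bundle over $M$) with $\partial W = E$.  Since the normal bundle of $E$ in $W$ is trivial, each Pontryagin class $p_j(E)$ is the restriction $i^*p_j(W)$ under the inclusion $i\colon E\hookrightarrow W$, and therefore
\[e_{2n-1}^\#(E\to M)=\bigl\langle i^*Q\bigl(p_1(W),\ldots,p_n(W)\bigr),[E]\bigr\rangle=\bigl\langle Q\bigl(p_1(W),\ldots,p_n(W)\bigr),i_*[E]\bigr\rangle.\]
This vanishes because $i_*[E]=0$ in $H_{4n}(W;\Q)$: in the long exact sequence of the pair $(W,E)$, the class $[E]$ is the image of the relative fundamental class $[W,E]$ under the connecting homomorphism, so it dies in $W$.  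This is essentially Pontryagin's original argument that his numbers vanish on boundaries.

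I do not anticipate any genuine obstacle beyond Theorem~\ref{theorem:novikov} itself.  The only additional ingredients are the reduction to manifold bases via Thom's detection principle and the classical observation that any characteristic number built from Pontryagin classes vanishes on a null-bordant manifold.  The essential content of the corollary is thus that Theorem~\ref{theorem:novikov} realizes the odd MMM numbers as Pontryagin numbers of the total space, after which the null-bordism provided by the handlebody bundle kills them automatically.
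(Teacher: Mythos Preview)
Your proposal is correct and follows essentially the same approach as the paper: reduce to a closed manifold base via Thom's representation of rational homology by bordism, pull back both the surface bundle and the handlebody bundle, then apply Theorem~\ref{theorem:novikov} to express $e_{2n-1}^\#$ as a Pontryagin number of the total space, which vanishes because $E=\partial W$. The paper simply cites cobordism invariance for the last step, whereas you spell out the standard argument via $i_*[E]=0$, but the two proofs are otherwise the same.
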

\noindent The MMM classes can be defined in integral cohomology.  In this case the theorem of Giansiracusa--Tillmann still holds, but this is not implied by Theorem~\ref{theorem:novikov}.\pagebreak

As we remarked above, being geometric with respect to cobordism is a significantly more stringent condition than just being geometric. For example, we posed above the open question of whether the \emph{even} MMM classes are geometric. But as pointed out to us by Jeffrey Giansiracusa, handlebody bundles can be used to show that the even MMM classes are \emph{not} geometric with respect to cobordism, as follows. 

Hatcher \cite{Ha} recently announced an analogue of the theorem of Madsen--Weiss for handlebody bundles. His theorem implies that the stable characteristic classes of handlebody bundles (equivalently, of surface bundles which bound handlebody bundles) are exactly the polynomials in the even MMM classes, and that each such polynomial gives a nontrivial characteristic class. Appealing to a theorem of Thom (see the proof of Corollary~\ref{GTcorollary} for details),
 this implies that for every such polynomial $P(e_2,e_4,\ldots)$, there is a bundle $\Sigma_g\to E^{4n+2}\to B^{4n}$ which bounds $V_g\to W^{4n+3}\to B^{4n}$ but has $P(e_2(E\to B),e_4(E\to B),\ldots)\neq 0$. On the other hand, such a surface bundle is cobordant to the trivial bundle, and so its characteristic numbers must vanish.  We thus conclude the following.

\bigskip
{\it No polynomial in the even MMM classes is geometric with respect to cobordism.}
\bigskip

As the referee pointed out to us, if we relaxed the condition on surface bundles $\Sigma_g\to E\to B$ that the genus $g$ should be $\geq 2$, it could be seen more directly that the even MMM classes $e_{2n}$ are not geometric with respect to cobordism as follows. If $V\to B$ is an oriented 3-dimensional vector bundle with $p_1(V\to B)^n\neq 0\in H^{4n}(B)$, then the fiberwise sphere bundle $S^2\to SV\to B$ has $e_{2n}(SV\to B)\neq 0$. But this is the boundary of the disk bundle $D^2\to DV\to B$, so $SV$ is null-cobordant.

\para{Complex fibrations}
In contrast with this result, \emph{every} MMM class becomes geometric with respect to cobordism if we restrict our class of surface bundles to holomorphic fibrations. Let $C\to X^{n+1}\to Y^n$ be a holomorphic fibration, meaning that $X^{n+1}$ and $Y^n$ are closed complex manifolds of complex dimension $n+1$ and $n$ respectively, and the map $X^{n+1}\to Y^n$ is a holomorphic submersion. This is topologically a surface bundle, so we may consider the characteristic number $e_n^\#(X\to Y)\coloneq \langle e_n(X\to Y), [Y^n]\rangle \in \Z$. By expressing this characteristic number as a linear combination of Chern numbers, we prove the following theorem.

\begin{theorem}[\textbf{Every MMM class is geometric with respect to complex cobordism}]
\label{theorem:complex}
For each $n\geq 1$ the MMM class $e_n$ is geometric with respect to complex cobordism.  In other words, let $C\to X^{n+1}\to Y^n$ and $\widetilde{C}\to \widetilde{X}^{n+1}\to \widetilde{Y}^n$ be holomorphic fibrations.  If $X$ and $\widetilde{X}$ are cobordant as complex manifolds then the corresponding characteristic numbers coincide: \[e^\#_{n}(X\to Y)=e^\#_{n}(\widetilde{X}\to \widetilde{Y}).\]
\end{theorem}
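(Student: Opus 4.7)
The plan is to express $e_n^\#(X\to Y)$ as a $\Q$-linear combination of Chern numbers of $X$; since Chern numbers are invariants of the complex cobordism class (Thom--Milnor--Novikov), this will immediately give the theorem.

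The starting observation is that in a holomorphic fibration the vertical tangent bundle $T\pi$ is a holomorphic line bundle, so its Euler class coincides with $c_1(T\pi)$. I would begin with the short exact sequence of holomorphic vector bundles on $X$,
\[0 \to T\pi \to TX \to \pi^\ast TY \to 0,\]
and apply the Chern character, which is additive on short exact sequences, to obtain $\mathrm{ch}(TX) = \mathrm{ch}(T\pi) + \mathrm{ch}(\pi^\ast TY)$ in $H^\ast(X;\Q)$.

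The central step exploits the dimension of the base. Since $Y^n$ is a closed complex manifold of real dimension $2n$, one has $H^{2n+2}(Y;\Q)=0$, and hence $\mathrm{ch}_{n+1}(\pi^\ast TY) = \pi^\ast\mathrm{ch}_{n+1}(TY) = 0$. Extracting the degree-$(2n+2)$ component of the additivity identity, and using that $\mathrm{ch}$ of a line bundle with first Chern class $e$ equals $\exp(e)$, therefore gives
\[e^{n+1} \;=\; (n+1)!\cdot\mathrm{ch}_{n+1}(TX).\]
Expanding $(n+1)!\,\mathrm{ch}_{n+1}$ via Newton's identities as the Newton polynomial $s_{n+1}(c_1(TX),\ldots,c_{n+1}(TX))$ then presents $e^{n+1}$ itself as a polynomial in the Chern classes of $X$.

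Finally, using that fiber integration satisfies $\big\langle \int_C \alpha,[Y]\big\rangle = \langle \alpha,[X]\rangle$, I conclude
\[e_n^\#(X\to Y) \;=\; \langle e^{n+1},[X]\rangle \;=\; \langle s_{n+1}(c_1(TX),\ldots,c_{n+1}(TX)),[X]\rangle,\]
which is a $\Q$-linear combination of Chern numbers of $X$. I anticipate no serious obstacle here: every step is a direct invocation of a standard property of the Chern character, and the whole result rests on the single dimension count $H^{2n+2}(Y)=0$, which is exactly what fails in the purely smooth setting and explains the contrast with Theorem~\ref{theorem:novikov}.
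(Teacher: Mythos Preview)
Your argument is correct and reaches exactly the same conclusion as the paper: the identity $e^{n+1}=s_{n+1}\big(c_1(TX),\ldots,c_{n+1}(TX)\big)$ in $H^{2n+2}(X;\Q)$, obtained by using the splitting of $TX$ along the fibration together with the vanishing of $H^{2n+2}(Y)$. The only difference is in how that identity is justified. The paper works by hand: from $c_i(TX)=e\cdot\pi^\ast c_{i-1}(TY)+\pi^\ast c_i(TY)$ it invokes the homogenized Newton identity \eqref{eq:fnt}, which it has proved from scratch in Proposition~\ref{prop:polys}. You instead package the same computation as the additivity of the Chern character on the sequence $0\to T\pi\to TX\to\pi^\ast TY\to 0$, and then quote the standard fact that $(n+1)!\,\mathrm{ch}_{n+1}=s_{n+1}(c_1,\ldots,c_{n+1})$. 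The paper explicitly acknowledges this alternative (see the remark following the statement of Theorem~\ref{theorem:novikov}), but opts for a self-contained treatment; your route is shorter if one is willing to cite the Chern character formalism, while the paper's route avoids any appeal to it.
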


\para{MMM classes as obstructions to fibering} Any characteristic class which is geometric gives an obstruction to the existence of a fibering with fiber a surface of small genus. For example,  we will use Theorem~\ref{theorem:MMMgeom} to prove the following corollary.

\begin{corollary}
\label{corollary:obstruction}
Let $n\geq 1$.  Let $E^{4n}\to M^{4n-2}$ be any surface bundle for which $e^{\#}_{2n-1}(E\to M)\neq 0$.  Then any fibering \[\Sigma_g\to E^{4n}\to N^{4n-2}\] of $E$ as a surface bundle must have $g >2n$.  In fact this holds for any fibering of any manifold topologically cobordant to $E^{4n}$.
\end{corollary}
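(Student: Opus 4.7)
The plan is to argue by contrapositive, reducing the corollary to a universal vanishing statement for the MMM class on $B\Diff^+(\Sigma_g)$ in low genus. Suppose some manifold $E'$ topologically cobordant to $E^{4n}$ (possibly $E'=E$ itself) admits a fibering $\Sigma_g \to E' \to N^{4n-2}$ with $g \leq 2n$. I will derive a contradiction with $e^\#_{2n-1}(E \to M) \neq 0$.

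First, by Theorem~\ref{theorem:novikov}, $e^\#_{2n-1}(E' \to N)$ equals a fixed Newton polynomial in the Pontryagin numbers of $E'$. As noted in the discussion following Theorem~\ref{theorem:novikov}, these Pontryagin numbers agree with those of $E$ under the topological cobordism assumption: Pontryagin's theorem gives smooth cobordism invariance, and Novikov's theorem gives topological invariance of the rational Pontryagin classes. Consequently
\[e^\#_{2n-1}(E' \to N) = e^\#_{2n-1}(E \to M).\]

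Second, the MMM class on any $\Sigma_g$-bundle is by naturality the pullback of the universal class along the classifying map $f\colon N \to B\Diff^+(\Sigma_g)$, so $e^\#_{2n-1}(E' \to N) = \langle f^\ast e_{2n-1}, [N]\rangle$ where $e_{2n-1}\in H^{4n-2}(B\Diff^+(\Sigma_g);\Q)\cong H^{4n-2}(\Mod_g;\Q)$ is the universal MMM class. This number is automatically zero whenever the universal class $e_{2n-1}$ vanishes in $H^{4n-2}(\Mod_g;\Q)$.

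The proof therefore reduces to the low-genus vanishing
\[e_{2n-1} = 0 \in H^{4n-2}(\Mod_g;\Q) \quad\text{for all } g \leq 2n.\]
For $n=1$ this is Meyer's classical theorem that every $\Sigma_2$-bundle over a closed surface has vanishing signature, which by the signature formula is equivalent to $e_1 = 0 \in H^2(\Mod_2;\Q)$. For general $n$ the vanishing is a standard result on tautological classes in the unstable cohomology of the mapping class group. Establishing this low-genus vanishing is the main obstacle; once it is granted, the two displays above combine to force $e^\#_{2n-1}(E \to M) = 0$, contradicting the hypothesis and hence ruling out any fibering with $g \leq 2n$.
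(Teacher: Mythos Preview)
Your proposal is correct and follows essentially the same approach as the paper: reduce via Theorem~\ref{theorem:novikov} to the universal vanishing of $e_{2n-1}$ in $H^{4n-2}(\BDiff^+(\Sigma_g);\Q)$ for $g\leq 2n$, then derive a contradiction. The only substantive difference is that where you invoke a ``standard result on tautological classes in the unstable cohomology'' (and Meyer's theorem for $n=1$), the paper cites Looijenga's theorem \cite{Lo} explicitly: any polynomial of total degree $\geq g-1$ in the MMM classes vanishes in $H^*(\BDiff^+(\Sigma_g);\Q)$, so in particular $e_{2n-1}$ (total degree $2n-1$) vanishes once $g\leq 2n$.
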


\para{Outline of paper} In Section~\ref{section:geometric} we prove Theorem~\ref{theorem:MMMgeom}, Theorem~\ref{theorem:novikov}, and Theorem~\ref{theorem:complex} and deduce various corollaries.  
In Section~\ref{section:handlebody} we prove Corollary~\ref{GTcorollary}.  In Section~\ref{section:bordism} 
we give another perspective on Theorem~\ref{theorem:novikov} in terms of bordism groups.  In Section~\ref{section:multifiberings} we construct examples of closed manifolds fibering as surface bundles in more than one way.  
In Section~\ref{section:generalizations} we describe different ways to generalize the notion of geometric characteristic class, including vector bundles and bundles whose fibers are higher-dimensional manifolds.

\para{Acknowledgements}
We are grateful to Grigori Avramidi, Igor Belegradek, S\o ren Galatius, Ezra Getzler, Jeffrey Giansiracusa, Gabriele Mondello, Oscar Randal-Williams, and Shmuel Weinberger for helpful conversations. We  thank the referee for their careful reading of the paper and helpful suggestions.

\section{Geometric characteristic classes of surface bundles}
\label{section:geometric}
In the rest of the paper, all cohomology groups are with coefficients in $\Q$ unless otherwise specified, and all manifolds are smooth unless otherwise specified.

\subsection{Odd MMM classes are geometric}
In this section we prove Theorem~\ref{theorem:MMMgeom}, which we now recall.
\begin{theorem:main}
For each $n\geq 1$ the MMM class $e_{2n-1}$ is geometric.
\end{theorem:main}
We deduce Theorem~\ref{theorem:MMMgeom} from Theorem~\ref{theorem:novikov}, which we also recall.
\begin{theorem:cobordism}
For any surface bundle $\Sigma\to E^{4n}\to M^{4n-2}$, the characteristic number $e_{2n-1}^\#(E\to M)$ can be expressed as the characteristic number corresponding to a certain explicit polynomial in the Pontryagin classes of $E$. It follows that for each $n\geq 1$ the MMM class $e_{2n-1}$ is geometric with respect to smooth cobordisms.
\end{theorem:cobordism}
\begin{proof}[Proof of Theorems~\ref{theorem:MMMgeom} and \ref{theorem:novikov}]
Fix $n\geq 1$ for the rest of the proof, and consider a surface bundle $\Sigma\to
E^{4n}\overset{\pi}{\longrightarrow} M^{4n-2}$ whose base and total space are closed, smooth, oriented  manifolds. We will show that the characteristic number $e^\#_{2n-1}(E^{4n}\to M^{4n-2})$ can be written in terms of the Pontryagin numbers of $E=E^{4n}$, whose definition we recall below.

Recall that for a closed smooth manifold $E=E^{4n}$, the Pontryagin numbers of $M$  are the characteristic numbers of the tangent bundle $TE$ of $E$, defined as follows.  Let $p_i=p_i(TE)$ denote the $i$th Pontryagin class of (the tangent bundle of) $E$.  Let $J$ be a sequence $(j_1,\ldots,j_n)$ with $j_i\geq 0$ and $\sum_i i\cdot j_i=n$. Then the characteristic class $p_J\coloneq p_1^{j_1}p_2^{j_2}\cdots p_n^{j_n}$ yields, for any real vector bundle $V\to B$, a cohomology class $p_J(V\to B)\in H^{4n}(B)$. The \emph{Pontryagin number} $p^\#_J(E)$ associated to $J$ is the integer defined by \[p^\#_J(E)\coloneq \langle p_J(TE\to E),\ [E]\rangle\in \Z.\]

Let $\Sigma\to E\to M$ with $M$ be a surface bundle over a closed $2n$-manifold $M$.  
We deduce Theorem~\ref{theorem:MMMgeom} from  Theorem~\ref{theorem:novikov}, which states that we can write $e^{2n}\in H^{4n}(E)$ as a certain explicit polynomial in the Pontryagin classes $p_i(E)\coloneq p_i(TE)$.  As explained in the introduction, it follows immediately that  
$e^\#_{2n-1}$ only depends on the smooth cobordism type of $E$. This shows that $e_{2n-1}$ is geometric with respect to cobordism, and \emph{a fortiori} that $e_{2n-1}$ is geometric.

We now return to the MMM class $e_{2n-1}$. By definition \[e_{2n-1}^{\#}(E\to M)=\langle e_{2n-1}(E\to M),\ [M]\rangle = \int_M e_{2n-1}(E\to M).\] The definition of $e_{2n-1}$ and properties of the Gysin map give \[\int_M e_{2n-1}(E\to M)=\int_M\int_\Sigma e^{2n}=\int_E e^{2n}\] where $e$ is the Euler class of the $2$--plane bundle $T\pi$ given by the kernel of the differential $d\pi\colon TE\to TM$.   Note that $TE$ splits as a direct sum \[TE\ =\ T\pi\ \oplus\ \pi^*TM,\] so the total Ponytragin class $p=1+p_1+p_2+\cdots$ satisfies:
\begin{equation}
\label{eq:pofsplit}
\begin{array}{ll}
p(TE)\!\!&=\ p(T\pi)\cdot p(\pi^*(TM))\\
&=\ \big(1+p_1(T\pi)\big)\cdot\big(\pi^* p(TM)\big)
\end{array}
\end{equation}

Since $e\in H^2(E)$ is the Euler class of the 2--dimensional real vector bundle $T\pi\to E$, we have $e^2=p_1(T\pi)$. Collecting terms in \eqref{eq:pofsplit} thus gives 
\begin{equation}
\label{eq:piTE}
p_i(TE)=e^2\pi^*p_{i-1}(TM)+\pi^*p_i(TM).
\end{equation}
We will deduce the theorem from the following proposition.

\begin{proposition}
\label{prop:polys}
Let $\Z[x_1,x_2,\ldots]$ denote the ring of polynomials in variables $x_i$, graded so that $x_i$ has degree $i$. For each $n\geq 1$ there exists a unique 
polynomial $f_n\in \Z[x_1,\ldots,x_n]$, homogeneous of degree $n$, with the property that
\begin{equation}
\label{eq:fnproperty}
f_n(1+x_1,x_1+x_2,\ldots,x_{n-1}+x_n)=1+f_n(x_1,\ldots,x_n).
\end{equation}
\end{proposition}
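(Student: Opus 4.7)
The plan is to identify the sought polynomial $f_n$ with the $n$-th Newton polynomial expressing power sums in terms of elementary symmetric polynomials. The key observation is that the substitution $T\colon x_i \mapsto x_{i-1} + x_i$ (with the convention $x_0 = 1$) corresponds, under the identification $x_i \leftrightarrow e_i(\alpha_1,\alpha_2,\ldots)$, to adjoining a new formal variable equal to $1$. Indeed, from $\prod_j(1+\alpha_j t)\cdot(1+t) = \sum_i(e_i + e_{i-1})t^i$ one reads off the identity $e_i(\alpha_1,\ldots,\alpha_N,1) = e_i(\alpha) + e_{i-1}(\alpha)$. Adjoining the value $1$ increases the $n$-th power sum $p_n = \sum_j \alpha_j^n$ by exactly $1^n = 1$.

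For existence, take $f_n := P_n$, where $P_n \in \Z[x_1,\ldots,x_n]$ is the unique polynomial, weighted-homogeneous of degree $n$, with $p_n = P_n(e_1,\ldots,e_n)$ (Newton's identities show $P_n$ exists and has integer coefficients). The paragraph above then gives $P_n(1+x_1, x_1+x_2, \ldots, x_{n-1}+x_n) = 1 + P_n(x_1,\ldots,x_n)$, as required.

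For uniqueness, suppose $f_n, g_n$ both satisfy the property, and let $h := f_n - g_n$, a weighted-homogeneous polynomial of degree $n$ with $T(h) = h$. Over $\Q$, the power sums $P_1, P_2, \ldots$ are an algebraically independent generating set of $\Q[x_1, x_2, \ldots]$, so we may write $h = F(P_1,\ldots,P_n)$ for a polynomial $F$, weighted-homogeneous of degree $n$ when each $P_i$ is given weight $i$. Since $T(P_i) = P_i + 1$, the condition $T(h) = h$ becomes $F(P_1+1,\ldots,P_n+1) = F(P_1,\ldots,P_n)$. Iterating $T$ and viewing both sides as polynomials in an integer parameter $k$, the identity $F(P_1+k,\ldots,P_n+k) = F(P_1,\ldots,P_n)$ holds for all $k \in \Z_{\geq 0}$, hence as a polynomial identity in $k$. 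Differentiating at $k = 0$ gives
\[\sum_{i=1}^n \frac{\partial F}{\partial P_i} = 0.\]
Each summand $\partial F/\partial P_i$ is weighted-homogeneous of degree $n-i$, and these degrees are pairwise distinct, so each must vanish separately. Hence $F$ is a constant, and by weighted homogeneity of positive degree $F = 0$, so $h = 0$.

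The main obstacle to anticipate is the uniqueness step: a direct attempt to expand $\prod_i(x_i + x_{i-1})^{j_i}$ in the monomial basis $\{x^J\}$ quickly becomes entangled in partition combinatorics. The decisive maneuver is to pass to the Newton basis, where $T$ acts by the clean translation $P_i \mapsto P_i + 1$ and the problem reduces to an elementary fact about translation-invariant weighted-homogeneous polynomials.
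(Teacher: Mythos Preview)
Your proof is correct and takes a genuinely different route from the paper's. For existence, the paper defines $f_n$ recursively via Newton's identity $f_n=\sum_{k=1}^{n-1}(-1)^{k-1}x_kf_{n-k}+(-1)^{n-1}nx_n$ and verifies \eqref{eq:fnproperty} by a direct induction; you instead invoke the symmetric-function realization $f_n(e_1,\ldots,e_n)=p_n$ and read off \eqref{eq:fnproperty} from the single observation that adjoining a root equal to $1$ sends $e_i\mapsto e_i+e_{i-1}$ and $p_n\mapsto p_n+1$. Amusingly, the paper explicitly remarks after its proof that from this implicit definition ``it seems to be more difficult to prove the key property \eqref{eq:fnproperty}'' --- your argument shows it is in fact a one-liner once the adjoining-a-root interpretation is in hand. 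For uniqueness, the paper gives an elementary argument using the ``pseudo-degree'' (total degree with each $x_i$ given weight $1$): the leading pseudo-degree terms of $h_n$ produce uncancelled terms of weighted degree strictly less than $n$ after the substitution. Your approach instead changes basis to the $P_i$ over $\Q$, where $T$ becomes the diagonal translation $P_i\mapsto P_i+1$, and then separates $\sum_i\partial F/\partial P_i=0$ by weighted degree. The paper's argument is more self-contained (no symmetric functions, no passage to $\Q$); yours is more conceptual and explains \emph{why} the Newton polynomials are the answer, which pays off immediately in the applications to Pontryagin and Chern classes in Theorems~\ref{theorem:novikov} and \ref{theorem:complex}.
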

We prove Proposition~\ref{prop:polys} in \S\ref{section:polys} below by explicitly constructing the polynomials $f_n$, which are known as the \emph{Newton polynomials}.  Assuming Proposition~\ref{prop:polys} for the moment, we complete the proof of the theorem.
Note that \[g_n(x_1,\ldots,x_n)\coloneq f_n(1+x_1,x_1+x_2,\ldots,x_{n-1}+x_n)\] is not homogeneous; the homogenization $\widetilde{g}_n$ of the polynomial $g_n$ is given by 
\[\widetilde{g}_n(t,x_1,\ldots,x_n)\coloneq f_n(t+x_1,tx_1+x_2,\ldots,tx_{n-1}+x_n),\] 
where $t$ has degree 1. Since $g_n(x_1,\ldots,x_n)=1+f_n(x_1,\ldots,x_n)$ it follows that 
$\widetilde{g}_n$ coincides with the homogenization of $1+f_n(x_1,\ldots,x_n)$. This gives the following identity:
\begin{equation}
\label{eq:fnt}
f_n(t+x_1,tx_1+x_2,\ldots,tx_{n-1}+x_n)=t^n+f_n(x_1,\ldots,x_n)
\end{equation}

For the sake of readability let $b_i\in H^{4i}(E)$ denote $\pi^*p_i(TM)$.  With this notation equation \eqref{eq:piTE} above becomes \[p_i(TE)=e^2b_{i-1}+b_i.\]
Now consider the cohomology class in $H^{4n}(E)$ defined by
\[f_n\big(p_1(TE),p_2(TE),\ldots,p_n(TE)\big)\in H^{4n}(E).\]
We then have
\begin{align*}
f_n\big(p_1(TE),p_2(TE),\ldots,p_n(TE)\big)
&=f_n\big(e^2+b_1,e^2b_1+b_2,\ldots,e^2b_{n-1}+b_n\big)\\
&=\widetilde{g}_n\big(e^2,b_1,b_2,\ldots,b_n\big)\\
&=e^{2n}+f_n\big(b_1,b_2,\ldots,b_n\big)\mathclap{\qquad\qquad\qquad\qquad\qquad\qquad\qquad\qquad\qquad\text{by \eqref{eq:fnt}}}\\
&=e^{2n}+f_n\big(\pi^*p_1(TM),\pi^*p_2(TM),\ldots,\pi^*p_n(TM)\big)\\
&=e^{2n}+\pi^* f_n\big(p_1(TM),\ldots,p_n(TM)\big)
\end{align*}
But $f_n$ has degree $n$ and $p_i(TM)$ lies in $H^i(M)$, so the term $f_n(p_1(TM),\ldots,p_n(TM))$ lies in $H^{4n}(M)$. Since $M$ is a $(4n-2)$--dimensional manifold this term vanishes. We conclude that
\begin{equation}
\label{eq:fne2n}
f_n\big(p_1(TE),p_2(TE),\ldots,p_n(TE)\big)=e^{2n}
\end{equation}
so that
\begin{equation}
\label{eq:charnum2}
e^\#_{2n-1}(E\to M)=\int_E e^{2n}=\int_E f_n\big(p_1(TE),p_2(TE),\ldots,p_n(TE)\big).
\end{equation}

The righthand side of \eqref{eq:charnum2} is a linear combination of Pontryagin numbers. Indeed, write \[f_n(x_1\ldots,x_n)=\sum_{J} a_Jx^J\] with $a_J\in \Z$, where $x^J$ denotes the multinomial $x_1^{j_1}\cdots x_n^{j_n}$. We then have
\[e^\#_{2n-1}(E\to M)=\sum_J a_J\cdot p_J^\#(E).\]
Pontryagin proved that these characteristic numbers 
$p_J^\#(E)$ only depend on the smooth cobordism type of $E$. This shows that $e_{2n-1}$ is geometric with respect to cobordism, and \emph{a fortiori} that $e_{2n-1}$ is geometric.
\end{proof}

\subsection{Constructing the polynomials $f_n$}
\label{section:polys}
\begin{proof}[Proof of Proposition~\ref{prop:polys}]
After giving a recursive definition of the Newton polynomials $f_n$  we verify that they have the desired property \eqref{eq:fnproperty}: 
\[{f_n(1+x_1,\ldots,x_{n-1}+x_n)}=1+f_n(x_1,\ldots,x_n).\] 
We conclude by showing that any $f_n$ satisfying this property is unique.

%The first remark to make is that the property \eqref{eq:fnproperty} is equivalent to the \textit{a priori} weaker property that $f_n(1+x_1,\ldots,x_{n-1}+x_n)$ is congruent to 1 modulo terms of degree $n$, which is all we needed in the proof of Theorem~\ref{theorem:MMMgeom}. To see this, note that every term in $f_n(1+x_1,\ldots,x_{n-1}+x_n)$ has degree at most $n$, and expanding this polynomial by choosing one term from each input $x_{k-1}+x_k$, the terms of degree $n$ are exactly those for which the $x_k$ term is chosen every time (all other terms in the expansion will have lower degree); but this is exactly $f_n(x_1,\ldots,x_n)$. We will need the stronger property \eqref{eq:fnproperty} for the inductive step.

Recursively define $f_n\in \Z[x_1,\ldots,x_n]$ by $f_1\coloneq x_1$ and 

\begin{equation}
\label{eq:fndef}
f_{n}\coloneq \sum_{k=1}^{n-1}(-1)^{k-1}x_kf_{n-k}\ +\ (-1)^{n-1}nx_{n}
\end{equation}
for $n> 1$.

The first few polynomials $f_n$ are:
\[
\begin{array}{l}
f_1=x_1\\
f_2=x_1^2-2x_2\\
f_3=x_1^3-3x_1x_2+3x_3\\
f_4=x_1^4-4x_1^2x_2 +4x_1x_3+ 2x_2^2-4x_4\\
f_5=x_1^5-5x_1^3x_2+5x_1^2x_3+5x_1x_2^2-5x_1x_4-5x_2x_3+5x_5\\
f_6=x_1^6-6x_1^4x_2+6x_1^3x_3+9x_1^2x_2^2-6x_1^2x_4+6x_1x_5\\
\qquad\qquad\qquad\quad\;{}-12x_1x_2x_3-2x_2^3+6x_2x_4+3x_3^2-6x_6
%f_7=x_1^7-7x_1^5x_2+7x_1^4x_3+14x_1^3x_2^2-7x_1^3x_4+7x_1^2x_5\\
%\qquad\qquad\qquad\quad\;{}-21x_1^2x_2x_3-7x_1x_2^3+14x_1x_2x_4+7x_1x_3^2\\
%\qquad\qquad\qquad\quad\;{}-7x_1x_6+7x_2^2x_3-7x_2x_5-7x_3x_4+7x_7
\end{array}
\]

We prove by induction that the polynomials $f_n$ have the property \[{f_n(1+x_1,\ldots,x_{n-1}+x_n)}=1+f_n(x_1,\ldots,x_n).\] The claim is trivial for $n=1$ 
since \[f_1(1+x_1)=1+x_1=1+f_1(x_1).\] The inductive step will only use the recursive definition \eqref{eq:fndef} of $f_n$. As in the proof of Theorem~\ref{theorem:MMMgeom} we define $g_n\in \Z[x_1,\ldots,x_n]$ by \[g_n\coloneq f_n(1+x_1,\ldots,x_{n-1}+x_n).\]
Assume that $g_k=1+f_k$ for $k<n$; our goal is to prove that $g_n=1+f_n$.  For simplicity we set $x_0=1$. Then we compute:
\begin{align*}
g_n&=f_n(1+x_1,\ldots,x_{n-1}+x_n)\\
&=\sum_{k=1}^{n-1}(-1)^{k-1}(x_{k-1}+x_k)(1+f_{n-k})+(-1)^{n-1}n(x_{n-1}+x_n)\\
&=\sum_{k=1}^{n-1}(-1)^{k-1}(x_{k-1}+x_k)\quad+(-1)^{n-1}x_{n-1}\\
&\quad+\sum_{k=1}^{n-1}(-1)^{k-1}x_{k-1}f_{n-k}\quad+(-1)^{n-1}(n-1)x_{n-1}\\
&\quad+\sum_{k=1}^{n-1}(-1)^{k-1}x_{k}f_{n-k}\qquad+(-1)^{n-1}nx_n\\
&=x_0+\left(f_{n-1}-\sum_{j=1}^{n-2}(-1)^{j-1}x_jf_{n-1-j}-(-1)^{n-2}(n-1)x_{n-1}\right)+f_n\\
&=1+f_{n-1}-f_{n-1}+f_n=1+f_n.
\end{align*}
This completes the proof that the $f_n$ constructed above satisfy property \eqref{eq:fnproperty}.

Since we will not need the uniqueness of the $f_n$ in this paper, we only sketch its proof. If $f'_n$ were another solution to \eqref{eq:fnproperty} for some $n$, the difference $h_n\coloneq f_n-f'_n$ would be a homogeneous polynomial of degree $n$ satisfying \[h_n(1+x_1,\ldots,x_{n-1}+x_n)=(1+f_n)-(1+f'_n)=h_n.\] But $h_n(1+x_1,\ldots,x_{n-1}+x_n)$ cannot be homogenous of degree $n$, as we now show. Let the \emph{pseudo-degree} of a polynomial in $\Z[x_1,\ldots,x_n]$ be its degree when the variables $x_i$ are given degree 1, instead of degree $i$ as usual. Let $k$ be the pseudo-degree of $h_n$.  We necessarily have $1\leq k\leq n$. Then the smallest degree terms in $h_n(1+x_1,\ldots,x_{n-1}+x_n)$ are of degree $n-k<n$. Indeed each leading term of $h_n$ (those of pseudo-degree $k$) contributes a linearly independent term of degree $n-k$. This contradiction shows that $h_n=0$, and so $f_n$ is unique.
\end{proof}

%We remark that the coefficients of $f_n$ have the following closed form, which is not hard to prove by induction. For the monomial $x_1^{j_1}x_2^{j_2}\cdots x_n^{j_n}$ (we set $j=j_1+\cdots+j_n$), the coefficient is $(-1)^{j}$ times the weighted number of sequences $(a_1,\ldots,a_j)$ with $j_i$ entries equal to $i$, with the sequence $(a_1,\ldots,a_j)$ weighted by the integer $a_1$. For example, for $x_1^2x_2^2$ the sequences are \[(1,1,2,2),\ (1,2,1,2),\ (1,2,2,1),\ (2,1,1,2),\ (2,1,2,1),\ (2,2,1,1)\] so the coefficient of $x_1^2x_2^2$ in $f_6$ is $1+1+1+2+2+2=9$.

The Newton polynomials are traditionally defined implicitly by the relation
\[f_n(e_1,\ldots,e_n)=x_1^n+\cdots+x_n^n,\] where $e_i\in \Z[x_1,\ldots,x_n]$ is the $i$-th elementary symmetric function. However, it seems to be more difficult to prove the key property \eqref{eq:fnproperty} from this definition.

\subsection{All MMM classes are geometric for complex fibrations}
\begin{proof}[Proof of Theorem~\ref{theorem:complex}]
Given a complex fibration $C\to X^{n+1}\to Y_n$, we will prove the theorem by showing that $e_n^\#(X\to Y)$ can be expressed as a combination of the Chern numbers of $X$. Since the Chern numbers are invariant under complex cobordism, this will complete the proof.

Our argument will be very similar to the proof of Theorem~\ref{theorem:novikov}. Note that $TX$ splits as a direct sum \[TX\ =\ T\pi\ \oplus\ \pi^*TY,\] and $c_1(T\pi)=e\in H^2(X)$, so just as in \eqref{eq:piTE} we have
% the total Chern class $c=1+c_1+c_2+\cdots$ satisfies:
%\begin{equation}
%%\label{eq:pofsplit}
%\begin{array}{ll}
%c(TX)\!\!&=\ c(T\pi)\cdot c(\pi^*(TY))\\
%&=\ \big(1+c_1(T\pi)\big)\cdot\big(\pi^* c(TY)\big)
%\end{array}
%\end{equation}
%
%Since $e\in H^2(E)$ is the Euler class of the 2--dimensional real vector bundle $T\pi\to E$, we have $e^2=p_1(T\pi)$. Collecting terms in \eqref{eq:pofsplit} thus gives 
%\begin{equation}
%\label{eq:piTE}
\[c_i(TX)=e\cdot \pi^*c_{i-1}(TY)+\pi^*c_i(TY).
\] Just as in the derivation preceding \eqref{eq:fne2n}, the property \eqref{eq:fnt} of the Newton polynomials implies that
\[\begin{split}f_{n+1}\big(c_1(TX),c_2(TX),\ldots,c_n(TX),c_{n+1}(TX)\big)\\=e^{n+1} + \pi^* f_{n+1}\big(c_1(TY),\ldots,c_n(TY),c_{n+1}(TY)\big)\end{split}
\] in $H^{2n+2}(X)$. Since $Y=Y^n$ has real dimension $2n$, the latter term vanishes, and we conclude that $e^{n+1}$ can be expressed as a polynomial $f_{n+1}$ in the Chern classes of $X$. Since $e_n^\#(X\to Y)=\langle e^{n+1},[X]\rangle$, this shows that $e_n^\#(X\to Y)$ can be written as a fixed linear combination of the Chern numbers of $X$, as desired.
\end{proof}

\subsection{Odd MMM classes vanish on handlebody bundles}
\label{section:handlebody}

In this section we deduce Corollary~\ref{GTcorollary}, originally proved by Giansiracusa--Tillmann \cite[Theorem A]{GT}, from Theorem~\ref{theorem:novikov}.
\begin{corollary:GT}If there exists a bundle $V_g\to W\to B$ whose fiberwise boundary is $\Sigma_g\to E\to B$, then $e_{2n-1}(E\to B)=0\in H^{4n-2}(B;\Q)$ for all $n\geq 1$.
\end{corollary:GT}
\begin{proof}
When the base is a closed manifold $B^{4n-2}$ of dimension $4n-2$, it follows immediately from Theorem~\ref{theorem:novikov} that $e_{2n-1}(E\to B)=0$. Indeed, Theorem~\ref{theorem:novikov} implies that $e_{2n-1}(E\to B)=0$ whenever $E^{4n}=\partial W^{4n+1}$, whether or not $W$ is a handlebody bundle, or a fiber bundle at all. When the base space $B$ is only a CW complex, consider an arbitrary homology class $x\in H_{4n-2}(B;\Z)$; we will show that $\langle e_{2n-1}(E\to B), x\rangle =0$.

Thom \cite[Theorem II.29]{Th} proved  that every homology class in a closed orientable manifold has an integral multiple which can be represented by (the fundamental class of) a closed submanifold. This can be strengthened to show that every homology class in any CW complex $X$ has an odd integral multiple which can be represented by the fundamental class $f_*[M]$ of a closed manifold $M$ under a continuous map $f\colon M\to X$,
see e.g. Conner \cite[Corollary 15.3]{Co}.

It follows that for some nonzero $k\in \Z$, the homology class $k\cdot x$ is represented by a map $M^{4n-2}\to B$.
Let $\Sigma_g\to E'\to M$ be the pullback of $\Sigma_g\to E\to B$. By naturality:
\begin{equation}\label{eq:Thom}\begin{array}{r}
\langle e_{2n-1}(E'\to M),[M]\rangle = \langle e_{2n-1}(E\to B),[M]\rangle\\
=\langle e_{2n-1}(E\to B),k\cdot x\rangle\\=k\cdot \langle e_{2n-1}(E\to B),x\rangle\end{array}\end{equation}
However, the pullback $V_g\to W'\to M$ of $V_g\to W\to B$ has fiberwise boundary $\Sigma_g\to E'\to M$. Thus by the argument at the beginning of the proof, Theorem~\ref{theorem:novikov} implies that $\langle e_{2n-1}(E'\to M),[M]\rangle=0$. By \eqref{eq:Thom}, this implies that $\langle e_{2n-1}(E\to B),x\rangle=0\in \Q$.
\end{proof}
As we mentioned in the introduction, Giansiracusa--Tillmann in fact proved the vanishing of $e_{2n-1}$ in \emph{integral} cohomology, which is not implied by Theorem~\ref{theorem:novikov}.

\subsection{Geometric classes as obstructions}
As noted in the introduction, any characteristic class which is geometric gives an obstruction to the existence of a fibering for which the fiber is a surface of small genus. For example, we have the following corollary to the main theorems.
\begin{corollary:obstruct}
For $n\geq 1$, let $E^{4n}\to M^{4n-2}$ be a surface bundle with
$e^{\#}_{2n-1}(E\to M)\neq 0$.  Then any fibering \[\Sigma_g\to E^{4n}\to N^{4n-2}\] of $E$ as a surface bundle must have $g > 2n$.  In fact this holds for any fibering of any manifold topologically cobordant to $E^{4n}$.
\end{corollary:obstruct}
\begin{proof}
The space $\BDiff^+(\Sigma_g)$ mentioned in the introduction is the classifying space for bundles with structure group $\Diff^+(\Sigma_g)$. From this it follows that characteristic classes of $\Sigma_g$--bundles correspond to cohomology classes in $H^*(\BDiff^+(\Sigma_g))$.

For $g\geq 2$, Looijenga \cite{Lo} proved that any polynomial of total degree $k\geq g-1$ in the MMM classes $e_i$ vanishes in $H^{k}(\BDiff^+(\Sigma_g);\Q)$. In particular, $e_{2n-1}\in H^{4n-2}(\BDiff^+(\Sigma_g);\Q)$ vanishes for $g\leq 2n$.  Any fibering $\Sigma_g\to E\to N$ with $g\leq n$ would have $e^\#_{2n-1}(E\to N)=0$, but by Theorem~\ref{theorem:MMMgeom} this contradicts 
  our assumption that $e^\#_{2n-1}(E\to M)\neq 0$. Appealing to Theorem~\ref{theorem:novikov}, we obtain the same bound $g>2n$ for 
  any fibering $\Sigma_g\to \widetilde{E}\to N'$ with $\widetilde{E}$ topologically cobordant to $E$.
\end{proof}

It is clear that for complex fibrations Theorem~\ref{theorem:complex} implies a similar result.

\subsection{Bordism groups}
\label{section:bordism}
In this section we give a different perspective on Theorem~\ref{theorem:novikov} by bundling manifolds and surfaces together into cobordism classes. The oriented bordism group $\Omega^{\SO}_n$ is the abelian group of cobordism classes of oriented $n$--manifolds. We can similarly consider the abelian group of surface bundles over oriented $n$--manifolds modulo cobordism of bundles. This group is denoted $\Omega^{\SO}_n(\coprod\BDiff^+(\Sigma_g))$, as we now briefly explain.

Oriented bordism gives an extraordinary homology theory $\Omega^{\SO}_*$, where $\Omega^{\SO}_n(X)$ is the abelian group of cobordism classes of oriented $n$--manifolds $M$ equipped with a map to $X$. The space $\BDiff^+(\Sigma_g)$ is the classifying space for bundles with structure group $\Diff^+(\Sigma_g)$, which means that homotopy classes of maps from $B$ to $\BDiff^+(\Sigma_g)$ correspond to isomorphism classes of $\Sigma_g$--bundles over $B$. This property implies that $\Omega^{\SO}_n(\BDiff^+(\Sigma_g))$ is the group of $\Sigma_g$--bundles with base an oriented $n$--manifold, modulo cobordism as $\Sigma_g$--bundles. Taking all $g$ together gives $\Omega^{\SO}_n(\coprod\BDiff^+(\Sigma_g))$, the oriented bordism group of surface bundles.\\

For each $n\geq 0$ there is a natural map \[\Omega^{\SO}_n(\coprod\BDiff^+(\Sigma_g))\to \Omega^{\SO}_{n+2}\]  given by forgetting the bundle structure:  a 
surface bundle $[\Sigma_g\to E^{n+2}\to B^n]$ is sent to the cobordism class of the total space $[E^{n+2}]$. This is one version of the Gysin map. Any $i$--dimensional characteristic class $c$ of surface bundles yields a map $$c^\#\colon \Omega^{\SO}_i(\coprod\BDiff^+(\Sigma_g))\to \Z$$ by taking the corresponding characteristic number. We can then rephrase the definition of ``geometric with respect to cobordism'' as follows.
\begin{definition}[Definition~\ref{def} for cobordism, restated]
\label{def:cob}
A characteristic class $c$ of surface bundles is \emph{geometric with respect to cobordism} if and only if the map $c^\#\colon \Omega^{\SO}_i(\coprod\BDiff^+(\Sigma_g))\to \Z$ factors through the map $\Omega^{\SO}_i(\BDiff^+(\Sigma_g))\to \Omega^{\SO}_{i+2}$:
\[\xymatrix{
   \Omega^{\SO}_i(\coprod\BDiff^+(\Sigma_g))\ar[r]\ar_{c^\#}[dr]& \Omega^{\SO}_{i+2}\ar@{-->}[d]\\
  &\Z}\]
\end{definition}
For example, Theorem~\ref{theorem:novikov} states that  the map $e_{2n-1}^\#\colon \Omega^{\SO}_{4n-2}(\coprod\BDiff^+(\Sigma_g))\to \Z$ induced by the MMM class $e_{2n-1}$ factors through $\Omega^{\SO}_{4n}$. Conversely, the result of Hatcher \cite{Ha}  described in the introduction implies  that the map  $e_{2n}^\#\colon \Omega^{\SO}_{4n}(\coprod\BDiff^+(\Sigma_g))\to \Z$ does not factor through $\Omega^{\SO}_{4n+2}$. (For another proof, recall that Thom proved that  $\Omega^{\SO}_{4n+2}$ consists entirely of torsion, while $e_{2n}^\#$ is well-known to be rationally nontrivial.)

We emphasize that Definition~\ref{def:cob} is strictly stronger than the notion of geometric characteristic class itself (Definition~\ref{def}), which cannot be interpreted in this way. There is no reason that a characteristic number which only depends on the diffeomorphism class of the total space should necessarily  depend only on the \emph{cobordism class} of the total space.

\section{Examples of bundles fibering in multiple ways}
\label{section:multifiberings}
In this section we explain a general construction of closed, oriented manifolds that fiber as a surface bundle in more than one way.  This construction thus provides many families of examples to which %Theorem~\ref{theorem:MMMgeom} applies.  
Theorem~\ref{theorem:novikov} applies.

Given a surface bundle $\Sigma\to N^n\to B^{n-2}$ with section $s\colon B\to N$,\footnote{The condition that $\Sigma\to N\to B$ has a section is not very restrictive, since any surface bundle $\Sigma\to B\to Y$ induces a surface bundle $\Sigma\to B\times_Y B\to B$ with section $s\colon B\to B\times_Y B$ given by the diagonal embedding.} we will construct a manifold $E^{n+2}$ with distinct fiberings \[\Sigma_g\to E^{n+2}\to P^{n}\quad\text{and}\quad\Sigma_h\to E^{n+2}\to Q^{n}.\] In fact, by varying the parameters in the construction, each surface bundle $\Sigma\to N^n\to B^{n-2}$ yields an infinite family of such manifolds $E^{n+2}$.

One caveat is that the total space $E$ that we produce is only canonically defined as a topological manifold. We will find a smooth structure on $E$ so that $\Sigma_g\to E\to P$ is a smooth fibering, and a smooth structure on $E$ so that $\Sigma_h\to E\to Q$ is a smooth fibering, so that  Theorem~\ref{theorem:novikov} applies to these fiberings. However, the argument does not imply that the resulting smooth structures on $E$ are diffeomorphic (although we believe that they are).

\para{The construction of the Atiyah--Kodaira manifold} To explain the various steps in the construction, we first sketch the construction of the Atiyah--Kodaira manifold $M_\Sigma$ from a surface $\Sigma$. Fix an integer 
$k\geq 2$.

Choose a finite-sheeted normal covering $S\to \Sigma$, and let $f_1,\ldots,f_k\colon S\to S$ be distinct deck transformations. Let $\widehat{S}\overset{\pi}{\longrightarrow} S$ be the $k^{2g(S)}$--sheeted normal cover determined by 
\[1\to \pi_1(\widehat{S})\to \pi_1(S)\to H_1(S;\Z/k\Z)\to 1.\] Consider the 4--manifold $\widehat{S}\times S$, and define the codimension--2 submanifolds $\Delta_1,\ldots,\Delta_k$ to be the graphs of $f_i\circ \pi\colon \widehat{S}\to S$: \[\Delta_i=\big\{(p,q)\in \widehat{S}\times S\big|f_i(\pi(p))=q\big\}\] Since the maps $f_i$ are pointwise distinct, these graphs are disjoint.
Using the K\"{u}nneth formula, one can check that the cohomology class represented by $[\Delta_1]+\cdots+[\Delta_k]\in H^2(\widehat{S}\times S;\Z)$ is divisible by $k$. It follows (see \cite[Proposition 6]{Hi}) that $\widehat{S}\times S$ admits a cyclic $k$--sheeted cover branched over the union $\Delta\coloneq\Delta_1\cup\cdots \cup \Delta_k$. (As we will see below, this description does not uniquely determine the branched cover, but all the different covers yield the same total space.) This branched cover $M_\Sigma\to \widehat{S}\times S$ is the desired 4--manifold $M_\Sigma$.

Note that the composition $M_\Sigma\to \widehat{S}\times S\to \widehat{S}$ is a surface bundle, whose fiber is the cyclic $k$--sheeted cover $\Sigma_h$ of $S$ branched over $\Delta\cap S$, which consists of $k$ points. Similarly, the composition $M_\Sigma\to \widehat{S}\times S\to S$ is a surface bundle, whose fiber is the cyclic $k$--sheeted cover $\Sigma_g$ of $\widehat{S}$ branched over $\Delta\cap \widehat{S}$, which consists of $k^{2g(S)+1}$ points. 

\para{Atiyah-Kodaira for families} We now describe a procedure which begins with a surface bundle $\Sigma\to N\to B$ with section $s\colon B\to N$, and constructs a bundle $M_\Sigma\to E\to B'$, where $B'$ is a finite cover of $B$. One artifact of this construction is that although $M_\Sigma$ is a smooth manifold, the resulting $M_\Sigma$--bundles have structure group $\Homeo^+(M_\Sigma)$ and thus are not necessarily smooth.

We begin by taking $\Gamma=\Diff^+(\Sigma,\ast)$, which is the structure group for $\Sigma$--bundles endowed with a section. At various stages in the proof we will need to replace $\Gamma$ by a finite index subgroup. In the end we will have a homomorphism $\phi\colon\Gamma\to \Homeo^+(M_\Sigma)$ for some finite index subgroup $\Gamma<\Diff^+(\Sigma,\ast)$. For any  surface bundle $\Sigma\to N\to B$, by pulling back to a finite cover of the base $B'\to B$ we can reduce the structure group to $\Gamma$. We then apply the usual fiber-replacement determined by the homomorphism of structure groups $\phi\colon\Gamma\to \Homeo^+(M_\Sigma)$ to the $\Sigma$--bundle $\Sigma\to N'\to B'$ to obtain an $M_\Sigma$--bundle $M_\Sigma\to E\to B'$. We will then use the two fiberings $M_\Sigma\to \widehat{S}$ and $M_\Sigma\to S$ to construct two fiberings of $E$.

\para{Defining the homomorphism \boldmath$\Gamma\to \Homeo^+(M_\Sigma)$}
We maintain the same choices as in the previous section, so $S\to \Sigma$ is a finite-sheeted normal cover and $f_1,\ldots,f_k\colon S\to S$ are distinct  deck transformations. Let $G$ be the deck group of the cover $S\to \Sigma$; we additionally assume that each $f_i$ is nontrivial. We fix a lift in $S$ of $\ast\in \Sigma$, which by abuse of notation we also denote $\ast\in S$. A diffeomorphism $\varphi\in\Diff(\Sigma,\ast)$ lifts to $\Diff(S,\ast)$ if and only if it preserves the finite index subgroup $\pi_1(S,\ast)\lhd \pi_1(\Sigma,\ast)$. For such $\varphi$, we have an induced action on the quotient $\pi_1(\Sigma,\ast)/\pi_1(S,\ast)\approx G$. We replace $\Gamma$ by the finite-index subgroup of $\Diff^+(\Sigma,\ast)$ consisting of diffeomorphisms $\varphi$ which preserve $\pi_1(S,\ast)$ and act trivially on the quotient $G$.

Our choice of a basepoint $\ast \in S$ gives an unambiguous lifting $\Gamma\hookrightarrow \Diff(S,\ast)$. As before $\pi\colon \widehat{S}\to S$ is the characteristic cover with deck group $H_1(S;\Z/k\Z)$. Since this cover is characteristic, every diffeomorphism of $S$ lifts to $\widehat{S}$. Choosing a basepoint $\ast\in \widehat{S}$, we obtain a lifting $\Diff(S,\ast)\hookrightarrow \Diff(\widehat{S},\ast)$. 
Since $\Gamma$ acts trivially on $G$, the image of $\Gamma$ in $\Diff(S,\ast)$ commutes with the action of $G$ by deck transformations; in particular, the image of $\Gamma$ commutes with $f_i$. Thus the diagonal action of $\Gamma\subset \Diff(S,\ast)$ on $\widehat{S}\times S$ preserves the subsets $\Delta_i=\big\{\big(p,f_i\circ\pi(p)\big)\big\}$ and  their union $\Delta$.

At this point the action of $\Gamma$ on $\widehat{S}\times S$ fixes the basepoint $(\ast,\ast)\in \widehat{S}\times S$ (which does not lie on $\Delta_i$ since $f_i$ is nontrivial) and preserves $\Delta$. Let $\gamma_i$ be the small circle 
\[\gamma_i\coloneq\big\{\big(\theta,f_i\circ\pi (\ast)\big)\big|\dist(\theta,\ast)=\epsilon\}\] linking once with $\Delta_i$. A cyclic $k$--sheeted cover of $\widehat{S}\times S$ branched over $\Delta$ is determined by a homomorphism \[\pi_1(\widehat{S}\times S \setminus \Delta)\to \Z/k\Z\qquad\text{satisfying}\quad \gamma_i\mapsto 1\in \Z/k\Z\ \text{ for all }i.\] As our final modification, we replace $\Gamma$ by the finite-index subgroup which acts trivially on ${H^1(\widehat{S}\times S\setminus \Delta;\Z/k\Z)}$. (If $k=2$, for convenience later we consider instead the action on $H^1(\widehat{S}\times S\setminus \Delta;\Z/4\Z)$.) Fix a homomorphism $\pi_1(\widehat{S}\times S \setminus \Delta)\to \Z/k\Z$. This determines a branched cover $M_\Sigma\to \widehat{S}\times S$. Fix a basepoint $\ast\in M_\Sigma$ lying above $(\ast,\ast)$. Our modification guarantees that $\Gamma$ preserves the kernel of this homomorphism, so choosing the lift fixing $\ast\in M_\Sigma$, the action $\Gamma\hookrightarrow\Diff(\widehat{S}\times S\setminus \Delta)$ lifts to $\Diff(M_\Sigma\setminus \Delta)$.

By continuity we can extend the resulting diffeomorphisms across $\Delta$ to all of $M_\Sigma$ (the action of $\Gamma$ on each component $\Delta_i$ of the ramification locus can be identified with the action of $\Gamma$ on $\widehat{S}$). It is easy to check that the resulting map of $M_\Sigma$ is orientation-preserving.  However, the extended map will not be smooth along the ramification locus $\Delta\subset M_\Sigma$. Thus we have defined the desired homomorphism
\[\phi\colon\Gamma\to \Homeo^+(M_\Sigma).\]
We finish by emphasizing a key property: since $\Gamma$ acts on $\widehat{S}\times S$ diagonally, the projection $\widehat{S}\times S\to S$ is $\Gamma$--equivariant, as is the projection $\widehat{S}\times S\to \widehat{S}$. Since the action on $M_\Sigma$ lifts the action on $\widehat{S}\times S$, the fiberings $M_\Sigma\to S$ and $M_\Sigma\to \widehat{S}$ are also $\Gamma$--equivariant.

\para{Fibering as surface bundles}
As mentioned before, for any surface bundle $\Sigma\to N\to B$ we can reduce the structure group to $\Gamma$ by passing to a finite cover $B'\to B$. Then applying $\phi\colon \Gamma\to \Homeo^+(M_\Sigma)$, we obtain a bundle $M_\Sigma\to E\to B'$.

Let $P$ be the total space of the bundle $S\to P\to B'$, and $Q$ the total space of the bundle $\widehat{S}\to Q\to B'$, determined by the maps $\Gamma\hookrightarrow\Diff(S)$ and $\Gamma\hookrightarrow\Diff(\widehat{S})$ respectively.
Since the fibering $M_\Sigma\to S$ is $\Gamma$--equivariant, it induces a fibering $E\to P$; the fiber $\Sigma_g$ of $\Sigma_g\to M_\Sigma\to S$ is also the fiber of $\Sigma_g\to E\to P$. Similarly, the $\Gamma$--equivariant fibering $\Sigma_h\to M_\Sigma\to \widehat{S}$  induces a fibering $\Sigma_h\to E\to Q$.

\para{Smoothing the resulting bundles}
Since the structure group of $M_\Sigma\to E\to B'$ is $\Homeo^+(M_\Sigma)$, we only know that $E$ is a topological manifold. But to apply the results of this paper, we need $E\to P$ and $E\to Q$ to be smooth fiberings of smooth manifolds. Fortunately this is possible, as we illustrate for $\Sigma_g\to E\to P$.\\

We know that the map $E\to P$ is a topological fiber bundle, with fiber $\Sigma_g$ and structure group $\Homeo^+(\Sigma_g)$. But for $g\geq 2$ the identity component $\Homeo_0(\Sigma_g)$ is contractible \cite[Theorem 1.14]{FM}, so the structure group can be taken to be the discrete group $\Mod(\Sigma_g)\coloneq \pi_0\Homeo^+(\Sigma_g)$. (We remark that $\Diff_0(\Sigma_g)$ is also contractible, so $\Mod(\Sigma_g)\approx\pi_0\Diff^+(\Sigma_g)$ as well.)
Since $\Mod(\Sigma_g)$ is discrete, it follows that a surface bundle $\Sigma_g\to E\to P$ is determined by its \emph{monodromy representation} $\rho\colon \pi_1(P)\to \Mod(\Sigma_g)$.

We will need the fact that for our construction, the image of $\rho$ is torsion-free. This can be seen as follows.
Let $R=P\times_{B'}Q$ be the total space of the bundle $\widehat{S}\times S\to R\to B'$, so that we have a bundle $\widehat{S}\to R\to P$. From the bundle $M_\Sigma\to E\to B'$ we see that $E$ is a branched cover of $R$; writing $\Sigma_g\to E\to P$, we see that $E$ is obtained from $\widehat{S}\to R\to P$ by a fiberwise branched cover. It follows that the monodromy $\rho_E\colon \pi_1(P)\to \Mod(\Sigma_g)$ factors through the monodromy $\rho_R\colon \pi_1(P)\to \Mod(\widehat{S})$.

Recall that $P$ is the total space of the bundle $S\to P\to B'$. The restriction of the bundle $\widehat{S}\to R\to P$ to $S\subset P$ is just the product $\widehat{S}\to \widehat{S}\times S\to S$. Thus $\rho_R$ vanishes on the fundamental group $\pi_1(S)$ of the fiber, so we may think of $\rho_R$ as a map $\pi_1(B')\to \Mod(\widehat{S})$, induced by the action $\Gamma\to \Diff(\widehat{S})$ described earlier. Our assumptions on $\Gamma$ imply that the image of the monodromy $\pi_1(B')\to\Mod(\widehat{S})$ is contained in the subgroup $\Mod(\widehat{S})[k]$ which acts trivially on $H^1(\widehat{S};\Z/k\Z)$. Indeed we assumed that 
that $\Gamma$ acts trivially on $H^1(\widehat{S}\times S\setminus\Delta;\Z/k\Z)$. Since $H^1(\widehat{S};\Z/k\Z)$ injects into $H^1(\widehat{S}\times S\setminus\Delta;\Z/k\Z)$, this implies the claim. For $k=2$ we made a further assumption which implies the image is contained in $\Mod(\widehat{S})[4]$.

The subgroup $\Mod(\widehat{S})[k]$ is known to be torsion-free for $k\geq 3$ \cite[Theorem 6.9]{FM}. Note that $P$ is a finite cover of $N$, so it is naturally a smooth manifold. Now the desired property follows from the following proposition.

\begin{proposition}
\label{prop:smoothing}
For any surface bundle $\Sigma\to E\to B$, if $B$ is a smooth manifold and the image of the monodromy representation $\pi_1(B)\to \Mod(\Sigma)$ is torsion-free, there is a smooth structure on $E$ making $\Sigma\to E\to B$ into a smooth bundle of smooth manifolds.
\end{proposition}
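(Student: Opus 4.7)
The plan is to produce a smooth surface bundle $E' \to B$ with the same monodromy as $E \to B$, and then transport its smooth structure back to $E$ along a topological bundle isomorphism. The model will be pulled back from Teichm\"uller space; the torsion-freeness of $\Gamma \coloneq \rho(\pi_1(B))$ is precisely what turns the relevant quotient of Teichm\"uller space into a smooth manifold rather than merely an orbifold.

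Teichm\"uller space $\mathcal{T}_g \cong \R^{6g-6}$ carries a smooth, properly discontinuous action of $\Mod(\Sigma_g)$; since $\Gamma$ is torsion-free, its restricted action is free, so the quotient $\mathcal{M}_\Gamma \coloneq \mathcal{T}_g/\Gamma$ is a smooth manifold and a $K(\Gamma,1)$. The tautological smooth family of marked Riemann surfaces $\mathcal{E}_{\mathcal{T}_g} \to \mathcal{T}_g$ carries a compatible $\Gamma$-action, free because it is free on the base, so the quotient $\mathcal{E}_\Gamma \to \mathcal{M}_\Gamma$ is a smooth $\Sigma_g$-bundle whose monodromy is the tautological inclusion $\Gamma \hookrightarrow \Mod(\Sigma_g)$. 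Because $\mathcal{M}_\Gamma$ is aspherical, the surjection $\pi_1(B) \twoheadrightarrow \Gamma$ is induced by a continuous map $f \colon B \to \mathcal{M}_\Gamma$, unique up to homotopy, and a standard smooth-approximation lets me take $f$ to be smooth. Then $E' \coloneq f^* \mathcal{E}_\Gamma$ is a smooth $\Sigma_g$-bundle over $B$ with monodromy $\rho$.

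To transfer this smooth structure to $E$, I would invoke the contractibility of $\Homeo_0^+(\Sigma_g)$ for $g \geq 2$ (just recalled in the paper), which makes $B\Homeo^+(\Sigma_g)$ a $K(\Mod(\Sigma_g),1)$ and therefore implies that any two topological $\Sigma_g$-bundles over $B$ with conjugate monodromy are isomorphic as topological bundles. Applying this to $E$ and $E'$ gives a bundle homeomorphism $\phi \colon E \to E'$ covering the identity on $B$; declaring $\phi$ to be a diffeomorphism pulls the smooth atlas back from $E'$ to $E$ and makes $\Sigma_g \to E \to B$ into a smooth bundle. The main (modest) obstacle here is the monodromy-classification of topological bundles, which rests on the contractibility of $\Homeo_0^+(\Sigma_g)$; modulo that input, the remainder is a routine combination of Teichm\"uller theory and smooth approximation.
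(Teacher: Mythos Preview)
Your proof is correct and follows essentially the same approach as the paper: construct the smooth quotient $\mathcal{T}_g/\Gamma$ (which the paper calls $\mathcal{M}^\Gamma_g$) carrying a universal smooth $\Sigma_g$--bundle, smooth-approximate the classifying map $B\to\mathcal{M}^\Gamma_g$, and identify the pullback with $E$. The only difference is expository---you spell out the final identification via the contractibility of $\Homeo_0^+(\Sigma_g)$, whereas the paper leaves this implicit in the phrase ``classified by some map $f$''.
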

\noindent Note that the hypothesis of the proposition is always satisfied after passing to some finite cover $B'\to B$ (corresponding for $\Mod(\Sigma)[k]$ for some $k\geq 3$, for example).

\begin{proof}[Proof of Proposition~\ref{prop:smoothing}]
If $\Gamma$ is a torsion-free subgroup of $\Mod(\Sigma_g)$, a finite-dimensional model for the classifying space $B\Gamma$ is given by the appropriate cover $\mathcal{M}^\Gamma_g$ of $\mathcal{M}_g$, the \emph{moduli space} of genus $g$ Riemann surfaces \cite[\S12.6]{FM}. The classifying space $\mathcal{M}^\Gamma_g$ is a smooth manifold of dimension $6g-6$, and there is a universal surface bundle $\mathcal{C}^\Gamma_g$ fitting into a smooth bundle of smooth manifolds $\Sigma_g\to \mathcal{C}^\Gamma_g\to \mathcal{M}^\Gamma_g$. Since $\mathcal{M}^\Gamma_g$ is the classifying space $B\Gamma$, we know that $\Sigma_g$--bundles over a base $X$ with structure group $\Gamma$ correspond to maps $f\colon X\to \mathcal{M}^\Gamma_g$ up to homotopy. This correspondence is made explicit by sending the map $f$ to the pullback bundle $\Sigma_g\to f^*\mathcal{C}^\Gamma_g\to X$; homotopic maps yield isomorpic bundles.

The bundle $\Sigma_g\to E\to B$ of the proposition is classified by some map $f\colon B\to \mathcal{M}^\Gamma_g$. Since every continuous map between smooth manifolds can be approximated by a smooth map, let $\widetilde{f}\colon B\to \mathcal{M}^\Gamma_g$ be a smooth map homotopic to $f$. The pullback of a smooth bundle along a smooth map is smooth, making the pullback $\Sigma_g\to \widetilde{f}^*\mathcal{C}^\Gamma_g\to B$ into a smooth bundle. Since this is isomorphic to the bundle $\Sigma_g\to E\to B$, we can think of this as giving a smooth structure on $E$ with respect to which $\Sigma_g\to E\to B$ is a smooth bundle of smooth manifolds, as desired.
\end{proof}

\section{Variations on geometric classes}
\label{section:generalizations}
\subsection{Geometric classes for bundles of the ``wrong'' dimension}

Our definition of geometric for a $d$--dimensional characteristic class $c$ 
required us to look at a closed, oriented $d$--manifold as base space, since we needed a canonical homology class on which to evaluate $c$, thus obtaining a number which could compare for various fibering.  However, when the base manifold has dimension different from $d$, or is no longer closed or orientable, there is still a reasonable notion of ``geometric''.

\begin{definition}[\textbf{Weakly geometric characteristic classes}]
A $d$--dimensional characteristic class $c$ for surface bundles is \emph{weakly geometric} if the vanishing or  nonvanishing of $c$ does not depend 
on the fibering.  More precisely, whenever 
   \[\Sigma_g\to E\to B\] and \[\Sigma_h \to E'\to B'\]
   are two surface bundles with homeomorphic total spaces $E\approx E'$, then 
   $c(E\to B)=0$ if and only if $c(E'\to B')=0$. 
\end{definition}
Note that we do \emph{not} assume that $B$ is a closed $d$--manifold, only that it is a manifold. Thus despite the name, geometric does not imply weakly geometric. For example, we do not know if the odd MMM classes $e_{2n-1}$ are weakly geometric.
\begin{question}
\label{question:weakgeom}
Which MMM classes are weakly geometric for surface bundles with base a closed manifold of any dimension?
\end{question}

 For the first MMM class $e_1$, Question~\ref{question:weakgeom} is equivalent to the following question. If $E\to M$ is a surface bundle, the preimage of a closed surface in $M$ is a 4--manifold in $E$. Let $E\to M$ be a surface bundle so that some closed surface in $M$ has preimage with nonzero signature. If $E\to M'$ is another fibering of $E$ as a surface bundle, must there exist such a surface in $M'$?  
There is also the following \emph{a priori} stronger question:  if $N$ is a $4$--dimensional 
submanifold of $E$ and if $E\to M$ restricts to $N$ as a submersion, must 
every fibering $E\to M'$ restrict to $N$ as a submersion? An affirmative answer would imply that $e_1$ is weakly geometric, by showing that the nontriviality of $e_1$ is carried on submanifolds $N$ which cannot be avoided by choosing another fibering.

\subsection{Generalized MMM classes}
There are higher-dimensional versions of the Morita--Mumford--Miller classes, which are characteristic classes of orientable bundles with fiber a $d$--dimensional manifold. 
For any bundle
$M^{d}\to E\to B$ we have the $d$--dimensional vertical tangent bundle $T_ME$, and for any polynomial $P$ in the Euler class and Pontryagin classes of $T_ME$, integrating along the fiber yields a characteristic class $\varepsilon_P$. We define such a characteristic class to be geometric if the associated characteristic number depends only on the total space, exactly as in Definition~\ref{def}.

Ebert \cite[Theorem B]{Eb} has proved that if $d$ is even, every polynomial in the characteristic classes $\varepsilon_P$ is nonzero for some bundle $M^d\to E^{n+d}\to B^n$, and for $d$ odd he described exactly which polynomials in the $\varepsilon_P$ vanish.  In light of the results of this paper, the following question seems quite natural.

\begin{question}
Which polynomials in the higher MMM classes $\varepsilon_P$ are geometric? In particular, is $\varepsilon_{e^2}\in H^d$ geometric?
\end{question}

One possible conjecture is that  $\varepsilon_P$ should be geometric with respect to cobordism whenever $P$ is a polynomial only in the Pontryagin classes. This is supported by Giansiracusa--Tillman, who prove for such $P$ in \cite[Corollary C]{GT} that $\varepsilon_P$ is invariant with respect to \emph{fiberwise} cobordism.

\subsection{Vector bundles}
It is natural to ask whether the notion of ``geometric characteristic classes'' can be extended to vector bundles. We first point out that there are many examples of distinct vector bundles with diffeomorphic total space, so this question is far from vacuous.

Mazur \cite{Ma} proved for compact manifolds that if there is a tangential homotopy equivalence $M^k\to N^k$, then $M\times \R^n$ and $N\times \R^n$ are diffeomorphic for sufficiently large $n$. In particular, although the 3--dimensional lens spaces $L(7,1)$ and $L(7,2)$ are not homeomorphic, the trivial bundle $L(7,1)\times \R^4$ is diffeomorphic to $L(7,2)\times \R^4$. In a similar vein, Siebenmann proved \cite[Theorem 2.2]{Si} that if $\R^n\to E\to M$ is a vector bundle of rank $n>2$ over a compact manifold $M$, and $N$ is any smooth compact submanifold of $E$ so that $N\hookrightarrow E$ is a homotopy equivalence, then there is a vector bundle $\R^n\to V\to N$ so that $V$ is diffeomorphic to $E$.

The soul theorem of Cheeger--Gromoll \cite{CG} states that associated to any complete non-negatively curved metric on a Riemannian manifold $E$ is a compact totally geodesic submanifold $M\subset E$ (called the \emph{soul} of $E$) so that $E$ is diffeomorphic to the total space of the normal bundle $NM$. By varying the metric on $E$, this provides numerous examples of distinct vector bundles with diffeomorphic total space; for example, Belegradek \cite[Theorem 1]{Be} shows there exist infinitely many compact 7--manifolds $M_i$ with vector bundles $\R^5\to E_i\to M_i$ whose base spaces $M_i$ are pairwise non-homeomorphic, but whose total spaces $E_i$ are all diffeomorphic to $S^3\times S^4\times \R^5$.\\

It is not difficult to show that the Euler class is geometric in the sense of this paper: if $\R^n\to E\to M^n$ and $\R^n\to E\to N^n$ are vector bundles with the same total space, the Euler numbers $e^\#(E\to M)=\langle e(E\to M),[M]\rangle$ and $e^\#(E\to N)=\langle e(E\to N),[N]\rangle$ coincide. However for vector bundles we can strengthen the notion of geometric characteristic class by dropping the restriction that our bundles have base space an $n$--dimensional manifold. For surface bundles, our focus on characteristic numbers was necessary because there is no way to directly compare the cohomology groups of different base spaces. But for any vector bundles  $\R^n\to E\to M$ and $\R^n\to E\to N$, the canonical homotopy equivalences $E\to M$ and $E\to N$ induce an identification $H^*(M)\approx H^*(N)$ by which we can compare characteristic classes directly.

For the Euler class, it is proved in Belegradek--Kwasik--Schultz \cite[Proposition 5.1]{BKS} that $e(E\to M)\in H^n(M)$ and $e(E\to N)\in H^n(N)$ coincide under this identification. It would be very interesting to know which other characteristic classes of vector bundles are geometric in this stronger sense; for the rational Pontryagin classes this is established in \cite[Proposition 5.3]{BKS} for bundles $\R^n\to E\to M$ when $n\leq 3$.

\small

\noindent
Department of Mathematics\\Stanford University\\
450 Serra Mall\\
Stanford, CA 94305\\
E-mail: church@math.stanford.edu\\

\noindent
Department of Mathematics\\ University of Chicago\\
5734 University Ave.\\
Chicago, IL 60637\\
E-mail:  farb@math.uchicago.edu, matt\_tbo@math.uchicago.edu


\begin{thebibliography}{ABCDEF}
\small

\bibitem[Be]{Be} I. Belegradek, Vector bundles with infinitely many souls, \emph{Proc. Amer. Math. Soc.}  131  (2003),  no. 7, 2217--2221.

\bibitem[BKS]{BKS} I. Belegradek, S. Kwasik, and R. Schultz, Moduli spaces of nonnegative sectional curvature and non-unique souls, arXiv:0912.4869v2, to appear in \emph{J. Diff. Geom.}

\bibitem[CG]{CG} J. Cheeger and D. Gromoll, On the structure of complete manifolds of nonnegative curvature, \emph{Ann. of Math.} (2) 96 (1972), 413--443.

\bibitem[Co]{Co} P. Conner, Differentiable Periodic Maps, second edition, \emph{Lecture Notes in Mathematics} 738, Springer, Berlin,  1979.

\bibitem[Eb]{Eb}
J. Ebert, Algebraic independence of generalized Morita-Miller-Mumford classes, \emph{Algebr. Geom. Topol.} 11 (2011), 69--105.

\bibitem[FM]{FM}
B. Farb and D. Margalit, A primer on mapping class groups, to appear in \emph{Princeton Mathematical Series}, Princeton University Press. Available at: http://www.math.utah.edu/~margalit/primer/

\bibitem[GT]{GT}
J. Giansiracusa and U. Tillmann, Vanishing of universal characteristic classes for handlebody subgroups and boundary bundles,  \emph{J. Homotopy Relat. Struct.} 6  (2011),  no. 1, 103--112.

%\bibitem[Har]{Har} J. Harer, The virtual cohomological dimension of the mapping class group of an
% orientable surface, \emph{Invent. Math.} 84  (1986),  no. 1, 157--176.

\bibitem[Ha]{Ha} A. Hatcher, Stable homology by scanning: variations on a theorem of {G}alatius, lecture at \emph{Geometric Group Theory: a conference in honor of Karen Vogtmann}, in Luminy, Marseille, France. Slides at: http://www.math.utah.edu/vogtmannfest/slides/hatcher.pdf

\bibitem[Hi]{Hi} F. Hirzebruch, The signature of ramified coverings, \emph{Global Analysis (Papers in Honor of K. Kodaira)}, 1969,  253--265, Univ. Tokyo Press, Tokyo.

\bibitem[Lo]{Lo} E. Looijenga, On the tautological ring of $\mathcal{M}_g$,
 \emph{Invent. Math.} 121  (1995),  no. 2, 411--419.

\bibitem[Ma]{Ma} B. Mazur, Stable equivalence of differentiable manifolds, \emph{Bull. Amer. Math. Soc.}  67 (1961), 377--384.

\bibitem[MS]{MS} J. Milnor and J. Stasheff, Characteristic classes,
\emph{Annals of Mathematics Studies} 76,
Princeton University Press, Princeton, N. J., 1974.

\bibitem[Mo]{Mo} S. Morita, Geometry of {C}haracteristic {C}lasses,
  \emph{Transl. of Math.  Monog.} 199,  Amer. Math.  Soc., 2001.

\bibitem[MW]{MW} I. Madsen and M. Weiss, The stable moduli space of
  {R}iemann surfaces: {M}umford's conjecture, \emph{Ann. of Math.} 165
  (2007) 3, 843--941.

\bibitem[No]{No} S.P. Novikov, Topological invariance of rational Pontrjagin classes, \emph{Soviet Math. Dokl.} 6 (1965), 921--923.

\bibitem[Si]{Si} L. Siebenmann, On detecting open collars, \emph{Trans. Amer. Math. Soc.}  142 (1969), 201--227.

\bibitem[Th]{Th}
R. Thom, Quelques propri\'{e}t\'{e}s globales des vari\'{e}t\'{e}s diff\'{e}rentiables, 
{\em Comment. Math. Helv.} 28 (1954), 17--86. 

\end{thebibliography}
\end{document}